\nonstopmode \numberwithin{equation}{section}
\newtheorem*{theoA}{Theorem A}
\newtheorem*{theoB}{Theorem B}
\newtheorem*{theoD}{Theorem D}
\theoremstyle{plain}
\newtheorem{prop}{Proposition}
\newtheorem{ques}{Question}
\newtheorem{conj}{Conjecture}
\theoremstyle{definition}
\newtheorem{defi}{Definition}[section]
\newtheorem{cor}{Corollary}[section]
\newtheorem{thm}{Theorem}[section]
\newtheorem{lem}{Lemma}[section]
\newtheorem{prob}{Problem}
\newtheorem{rem}{Remark}[section]
\theoremstyle{plain}
\newtheorem*{thmC}{Theorem C}
\newtheorem*{lemA}{Lemma A}
\newtheorem*{lemB}{Lemma B}
\newcounter{minutes}\setcounter{minutes}{\time}
\newcounter{hours}\setcounter{hours}{\time}
\newcounter {own}
\def\theown {\thesection       .\arabic{own}}
\newenvironment{pf}[1][]{%
	\vskip 3mm
	\noindent
	\ifthenelse{\equal{#1}{}}%
	{{\slshape Proof. }}%
	{{\slshape #1.} }%
}%
{\qed\bigskip}
\newcounter{alphabet}
\newcommand{\A}{{\mathcal A}}
\def\be{\begin{equation}}
	\def\ee{\end{equation}}
\newcommand{\bee}{\begin{enumerate}}
	\newcommand{\eee}{\end{enumerate}}
\newcommand{\blem}{\begin{lem}}
	\newcommand{\elem}{\end{lem}}
\newcommand{\bthm}{\begin{thm}}
	\newcommand{\ethm}{\end{thm}}
\newcommand{\bcor}{\begin{cor}}
	\newcommand{\ecor}{\end{cor}}
\newcommand{\beg}{\begin{examp}}
	\newcommand{\eeg}{\end{examp}}
\newcommand{\begs}{\begin{examples}}
	\newcommand{\eegs}{\end{examples}}
\newcommand{\bdefn}{\begin{defn}}
	\newcommand{\edefn}{\end{defn}}
\newcommand{\bprob}{\begin{prob}}
	\newcommand{\eprob}{\end{prob}}
\newcommand{\bei}{\begin{itemize}}
	\newcommand{\eei}{\end{itemize}}
\newcommand{\bcon}{\begin{conj}}
	\newcommand{\econ}{\end{conj}}
\newcommand{\bcons}{\begin{conjs}}
	\newcommand{\econs}{\end{conjs}}
\newcommand{\bprop}{\begin{prop}}
	\newcommand{\eprop}{\end{prop}}
\newcommand{\br}{\begin{rem}}
	\newcommand{\er}{\end{rem}}
\newcommand{\brs}{\begin{rems}}
	\newcommand{\ers}{\end{rems}}
\newcommand{\bo}{\begin{obser}}
	\newcommand{\eo}{\end{obser}}
\newcommand{\bos}{\begin{obsers}}
	\newcommand{\eos}{\end{obsers}}
\newcommand{\bpf}{\begin{pf}}
	\newcommand{\epf}{\end{pf}}
\newcommand{\ba}{\begin{array}}
	\newcommand{\ea}{\end{array}}
\newcommand{\beq}{\begin{eqnarray}}
	\newcommand{\beqq}{\begin{eqnarray*}}
		\newcommand{\eeq}{\end{eqnarray}}
	\newcommand{\eeqq}{\end{eqnarray*}}
\begin{document}

\title{Bohr radius for invariant families of bounded analytic functions and certain Integral transforms}

\author{Molla Basir Ahamed}
\address{Molla Basir Ahamed, Department of Mathematics, Jadavpur University, Kolkata-700032, West Bengal, India.}
\email{mbahamed.math@jadavpuruniversity.in}

\author{Partha Pratim Roy}
\address{Partha Pratim Roy, Department of Mathematics, Jadavpur University, Kolkata-700032, West Bengal, India.}
\email{parthacob2023@gmail.com}

\author{Sabir Ahammed}
\address{Sabir Ahammed, Department of Mathematics, Jadavpur University, Kolkata-700032, West Bengal, India.}
\email{sabira.math.rs@jadavpuruniversity.in}

\subjclass[{AMS} Subject Classification:]{Primary 30C45, 30C50,30C65, 30C80, 44A55}
\keywords{Harmonic mappings; analytic; univalent; close-to-convex functions; coefficient estimates, growth theorem, Bohr radius, Bohr-Rogosinski radius.}

\def\thefootnote{}
\footnotetext{ {\tiny File:~\jobname.tex,
printed: \number\year-\number\month-\number\day,
          \thehours.\ifnum\theminutes<10{0}\fi\theminutes }
}\makeatletter\def\thefootnote{\@arabic\c@footnote}\makeatother
\begin{abstract} 	
 In this paper, we first obtain a refined Bohr radius for invariant families of bounded analytic functions on the unit disk $ \mathbb{D} $. Then, we obtain Bohr inequality for certain integral transforms, namely Fourier (discrete) and Laplace (discrete) transforms of bounded analytic functions $ f(z)=\sum_{n=0}^{\infty}a_nz^n $, in a simply connected domain \begin{align*}
 		\Omega_\gamma:=\biggl\{z\in\mathbb{C}: \bigg|z+\dfrac{\gamma}{1-\gamma}\bigg|<\dfrac{1}{1-\gamma}\;\mbox{for}\; 0\leq \gamma<1\biggr\},
 \end{align*}
where $ \Omega_0=\mathbb{D} $. These results generalize some existing results. We also show that a better estimate can be obtained in radius and inequality can be shown sharp for Laplace transform of $ f $.
\end{abstract}
\maketitle
\pagestyle{myheadings}
\markboth{M. B. Ahamed, P. P. Roy and S. Ahammed}{Bohr radius for invariant families of bounded analytic functions and certain Integral transforms}

\section{introduction}
Let $\mathcal{B}$ be the set of all analytic functions of the form $ f(z)=\sum_{k=0}^{\infty}a_kz^k $ in the unit disk $\mathbb{D}:=\{z\in\mathbb{C} : |z|<1\}$ such that $|f(z)|\leq 1$ for all $z\in\mathbb{D}$. For $ f\in\mathcal{B} $, we define $ ||f||_{\infty}:=\sup_{z\in\mathbb{D}}|f(z)| $. We recall a classical theorem of  Harald Bohr for the class $\mathcal{B}$.
\begin{theoA}(see \cite{Bohr-1914})
	Let $f\in \mathcal{B}$ with the power series expansion  $f(z)=\sum_{n=0}^{\infty}a_nz^n$. If $||f||_{\infty}\leq 1$, then
	\begin{align}\label{Eq-1.1}
		M_f(r):=\sum_{n=0}^{\infty}|a_n|r^n\leq ||f||_{\infty}\; \mbox{for}\; |z|=r\leq\frac{1}{3}.
	\end{align}
	The constant $1/3$ is the best possible.
\end{theoA}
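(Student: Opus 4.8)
The plan is to derive the inequality from a single sharp coefficient estimate together with an elementary one-variable optimization, and then to pin down the constant by testing the disk automorphisms. The estimate I would use is
\[
|a_n|\le 1-|a_0|^2\qquad(n\ge 1),
\]
valid for every $f\in\mathcal{B}$ with $\|f\|_\infty\le 1$. For $n=1$ this is the Schwarz--Pick inequality at the origin: composing $f$ with the automorphism $\psi(w)=(w-a_0)/(1-\overline{a_0}w)$ yields $k=\psi\circ f\in\mathcal{B}$ with $k(0)=0$, and Schwarz's lemma gives $|k'(0)|\le 1$; since $k'(0)=f'(0)/(1-|a_0|^2)$, we get $|a_1|\le 1-|a_0|^2$. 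For general $n$ I would symmetrize: with $\omega=e^{2\pi i/n}$, the average $F(z)=\frac1n\sum_{j=0}^{n-1}f(\omega^j z)=\sum_{m\ge 0}a_{mn}z^{mn}$ again satisfies $\|F\|_\infty\le 1$, so $\widetilde F(w):=\sum_{m\ge0}a_{mn}w^m\in\mathcal{B}$; applying the $n=1$ case to $\widetilde F$, whose first two coefficients are $a_0$ and $a_n$, gives $|a_n|\le 1-|a_0|^2$.

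With the estimate in hand, set $A:=|a_0|\in[0,1]$ and sum the geometric tail:
\[
M_f(r)\le A+(1-A^2)\sum_{n=1}^{\infty}r^n=A+(1-A^2)\frac{r}{1-r}\qquad(0\le r<1).
\]
The task is now purely algebraic. A short rearrangement shows that $A+(1-A^2)r/(1-r)\le 1$ is equivalent to $(2+A)r\le 1$; since $A\le 1$ the worst case is $A=1$, and $(2+A)r\le 3r\le 1$ holds exactly for $r\le 1/3$. Hence $M_f(r)\le 1$ whenever $\|f\|_\infty\le 1$ and $r\le 1/3$. The sharper form $M_f(r)\le\|f\|_\infty$ follows by homogeneity: if $\|f\|_\infty\ne 0$, apply the bound just proved to $f/\|f\|_\infty$ (the case $f\equiv 0$ being trivial).

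For the optimality of $1/3$ I would test the Blaschke factors $\varphi_a(z)=(a-z)/(1-az)$ with $a\in[0,1)$, which satisfy $\|\varphi_a\|_\infty=1$, $|a_0|=a$, and $|a_n|=a^{n-1}(1-a^2)$. Summing gives
\[
M_{\varphi_a}(r)=a+(1-a^2)\frac{r}{1-ar},
\]
and $M_{\varphi_a}(r)>1$ is equivalent to $(1+2a)r>1$. Given any $r>1/3$, choosing $a<1$ close enough to $1$ makes $(1+2a)r>1$, so $M_{\varphi_a}(r)>1=\|\varphi_a\|_\infty$; thus the radius cannot be enlarged beyond $1/3$.

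The only genuinely nontrivial step is the coefficient bound $|a_n|\le 1-|a_0|^2$, and within it the passage from $n=1$ to general $n$ via the root-of-unity averaging; once this is secured, both the inequality and its sharpness reduce to the elementary estimates above. I expect the main point to watch is that the optimization in $A$ must be uniform over the entire interval $[0,1]$, which is precisely what forces the constant to be $1/3$.
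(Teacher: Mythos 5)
Your proposal is correct and complete: the coefficient bound $|a_n|\le 1-|a_0|^2$ via Schwarz--Pick plus root-of-unity averaging, the geometric-series estimate with the optimization $(2+A)r\le 1$, the reduction $M_f(r)\le\|f\|_\infty$ by homogeneity, and the sharpness test with the Blaschke factors $\varphi_a(z)=(a-z)/(1-az)$ all check out. Note, however, that the paper does not prove Theorem A at all --- it is quoted as classical background from Bohr's 1914 paper --- so there is no internal proof to compare against; what you have written is essentially the canonical modern proof (Wiener's argument, which sharpened Bohr's original constant $1/6$ to $1/3$), and it is exactly the template that later results in this paper (for example Theorem B on $\Omega_\gamma$, and the paper's Theorem 3.1) imitate: a Schwarz--Pick-type coefficient lemma, a monotonicity argument in $|a_0|$, and extremality of M\"obius-type functions as $a\to 1^-$.
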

 The constant $1/3$ is known as the classical Bohr radius, and \eqref{Eq-1.1} is known as the Bohr inequality for the class $\mathcal{B}$. In recent years, a significant amount of research has been devoted to improving, refining, or generalizing the classical Bohr inequality for different functional settings. Finding the Bohr radius for certain classes of functions has become an active area of research, and this study is known in the literature as the Bohr phenomenon. However, it is important to note that not every class of functions exhibits the Bohr phenomenon. For multidimensional study of the Bohr radius for holomorphic mappings, we refer to the articles \cite{Aizen-PAMS-1999,Boas-Khavinson-1997,Hamada-IJM-2009,Galicer-TAMS-2021,Kumar-PAMS-2022,Kumar-Manna-JMAA-2023} and references therein.\vspace{1.2mm}
 
 The classical theorem of Bohr actually gained significance after Dixon's work (see \cite{Dixon & BLMS & 1995}), where it was utilized to disprove a conjecture regarding the non-unital von Neumann inequality for Banach algebras. The exploration of the Bohr inequality for different classes of  functions in one as well several complex variables, and functional contexts has emerged as a highly engaging field of study in modern function theory, prompting extensive research efforts by numerous scholars in recent years. For different aspects of the Bohr inequality including recent progress in the topic, the reader is referred to the following articles  \cite{Alkh-Kay-Pon-PAMS-2019,Paulsen-PLMS-2002,Paulsen-PAMS-2004,Paulsen-BLMS-2006,Ismagilov-2020-JMAA,Kayumov-CRACAD-2018,Kayumov-Khammatova-JMAA-2021,Kayumov-MJM-2022,Kay & Pon & AASFM & 2019,Lata-Singh-PAMS-2022,Liu-JMAA-2021,Liu-Ponnusamy-MN,Liu-Ponnusamy-PAMS-2021,Ponnusamy-HJM-2021} and references therein. \vspace{1.2mm}

Similar to the Bohr radius and Bohr inequality, there is also a concept of Rogosinski radius and Rogosinski inequality in the literature. A combined inequality is known as the Bohr-Rogosinski inequality corresponding to a class of functions. Kayumov \emph{et al.} (see \cite{Kayumov-Khammatova-JMAA-2021}) established the Bohr-Rogosinski inequality in a generalized context and derived the Bohr-Rogosinski radii for Ces$\acute{a}$ro operators for bounded analytic functions on unit disk $\mathbb{D}$. In 2021, Liu \emph{et al.} (see \cite{Liu-Liu-Ponnusamy-BSM-2021}) introduced a refined version of the Bohr and Bohr-Rogosinski inequality in a broader perspective. Similarly, Kumar and Sahoo (see \cite{Kumar-Sahoo-MJM-2021}) determined the sharp Bohr-type radii for certain complex integral operators defined on a set of bounded analytic functions in  $\mathbb{D}$. Later, Allu and Ghosh (see \cite{Allu-Ghosh-2023}) investigated the Bohr-type inequality for Ces$\acute{a}$ro operators and Bernardi integral operators acting on analytic functions defined within a simply connected domain that includes the unit disk $\mathbb{D}$ in 2023. Meanwhile, Kayumov \textit{et al.} \cite{Kayumov-Khammatova-JMAA-2021} obtained the Bohr inequality by examining the integral representation of Ces$\acute{a}$ro operators on the set of holomorphic functions defined on the unit disk $\mathbb{D}$ and also analyzed the asymptotic behavior of the corresponding Bohr sum $ \sum_{n=0}^{\infty}|a_n|r^n $.\vspace{1.2mm}														

For $0\leq\gamma<1$, we consider the disk $\Omega_\gamma$ defined by 
\begin{align*}
	\Omega_\gamma=\biggl\{z\in\mathbb{C}: \bigg|z+\dfrac{\gamma}{1-\gamma}\bigg|<\dfrac{1}{1-\gamma}\biggr\}.
\end{align*}
\begin{figure}[!htb]
	\begin{center}
	\includegraphics[width=0.55\linewidth]{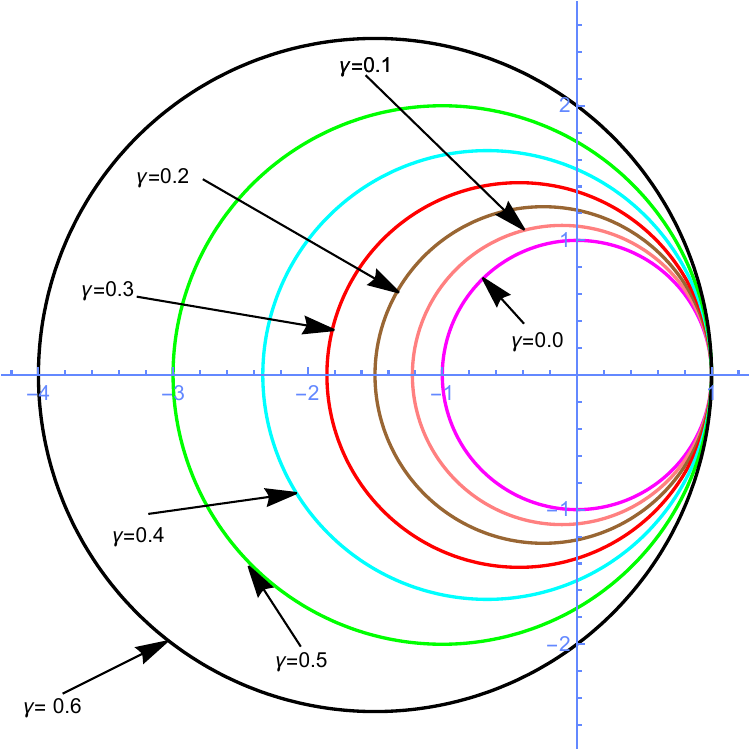}
	\end{center}
	\caption{The graph of the circle $C_{\gamma}:=\partial\Omega_{\gamma}$ for different values of $\gamma=0.0, 0.1, 0.2, 0.3, 0.4, 0.5$ and $0.6$.}
\end{figure}
It is  easy to see that the unit disk $\Omega_0=\mathbb{D}$ and for $ 0<\gamma<1 $, $ \mathbb{D}\subset \Omega_{\gamma} $. Several studies on the Bohr radius problem have been conducted for functions on $\Omega_\gamma$ and other similar classes. In $ 2010 $, Fournier and Ruscheweyh (see \cite{Four-Rusc-2010}) introduced the Bohr inequality specifically for the simply connected domain $\Omega_\gamma$ and obtained the following result as a generalization of the classical Bohr inequality.
\begin{theoB}\cite[Theorem 1]{Four-Rusc-2010} For $0\leq \gamma< 1$, let $f\in\mathcal{B}(\Omega_{\gamma})$ with the power series expansion  $f(z)=\sum_{n=0}^{\infty}a_nz^n$ in $\mathbb{D}$. Then,
\begin{align}\label{EQ-1.2}
	\sum_{n=0}^{\infty}|a_n|r^n\leq1\;\;\mbox{for}\;\;r\leq\rho_\gamma:=\frac{1+\gamma}{3+\gamma}.
\end{align}
Moreover, $\sum_{n=0}^{\infty}|a_n|\rho_\gamma^n=1$ holds for a function $f(z)=\sum_{n=0}^{\infty}a_nz^n$ in $\mathcal{B}(\Omega_{\gamma})$ if and only if $f(z)=c$ with $|c|=1$.
\end{theoB}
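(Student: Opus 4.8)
The plan is to transfer the whole problem to the unit disk by a conformal change of variables and then run a Schur-coefficient argument about an interior point. Since $\psi(w) := (w-\gamma)/(1-\gamma)$ maps $\mathbb{D}$ conformally onto $\Omega_\gamma$ with $\psi(\gamma)=0$, the composition $g := f\circ\psi$ belongs to $\mathcal{B}$ and satisfies $g(\gamma)=f(0)=a_0$. Writing $\psi^{-1}(z)=(1-\gamma)z+\gamma$ gives $f(z)=g((1-\gamma)z+\gamma)$, so if $c_n := g^{(n)}(\gamma)/n!$ denote the Taylor coefficients of $g$ at the point $\gamma$, then $a_n=(1-\gamma)^n c_n$. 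Consequently
\begin{align*}
\sum_{n=0}^\infty |a_n| r^n = \sum_{n=0}^\infty |c_n|\, t^n, \qquad t := (1-\gamma) r,
\end{align*}
and the theorem reduces to a shifted Bohr inequality for the single function $g\in\mathcal{B}$ expanded about $\gamma\in[0,1)$.

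Next I would estimate the coefficients $c_n$. Composing with the disk automorphism $\varphi(u):=(u+\gamma)/(1+\gamma u)$, the function $G:=g\circ\varphi$ lies in $\mathcal{B}$ with $G(0)=g(\gamma)=a_0$; writing $G(u)=\sum_k d_k u^k$, the standard Schur estimate gives $|d_k|\le 1-|a_0|^2$ for every $k\ge1$ (the same inequality that drives the classical proof of Theorem A). To pass back to $c_n$, substitute $w=\gamma+s$ in $g=G\circ\varphi^{-1}$; a direct computation gives $\varphi^{-1}(\gamma+s)=q(s):=s/(1-\gamma^2-\gamma s)$, whence $\sum_n c_n s^n=\sum_k d_k\, q(s)^k$. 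The key structural point is that, because $\gamma\ge0$, the series $q(s)$ has nonnegative coefficients, hence so does every power $q(s)^k$. Matching coefficients of $s^n$ for $n\ge1$ (the term $d_0$ only feeds $c_0$) and passing absolute values inside therefore yields
\begin{align*}
|c_n| \le (1-|a_0|^2)\,[s^n]\frac{q(s)}{1-q(s)}, \qquad n\ge 1.
\end{align*}

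With the coefficient bound in hand the remaining work is a summation and an optimization. Since $q(s)/(1-q(s))$ has nonnegative coefficients and vanishes at $s=0$, summing the previous display against $t^n$ collapses to evaluating the generating function, and an elementary simplification gives $q(t)/(1-q(t))=t/[(1+\gamma)(1-\gamma-t)]$; substituting $t=(1-\gamma)r$ turns this into $r/[(1+\gamma)(1-r)]$. Hence
\begin{align*}
\sum_{n=0}^\infty |a_n| r^n \le |a_0| + (1-|a_0|^2)\frac{r}{(1+\gamma)(1-r)}.
\end{align*}
Setting $A:=|a_0|\in[0,1]$, the right-hand side is $\le1$ precisely when $(1+A)\,r\le(1+\gamma)(1-r)$; the worst case $A\to1$ requires $2r\le(1+\gamma)(1-r)$, i.e. $r\le(1+\gamma)/(3+\gamma)=\rho_\gamma$, exactly the claimed radius.

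Finally, for sharpness I would note that at $r=\rho_\gamma$ the last displayed bound equals $1-\tfrac12(1-A)^2$, so $\sum_n|a_n|\rho_\gamma^n=1$ forces $A=|a_0|=1$. Since $f$ is analytic on the open set $\Omega_\gamma$ with $|f|\le1$ and $|f(0)|=1$ at the interior point $0$, the maximum modulus principle makes $f$ a unimodular constant $c$; the converse is immediate. The main obstacle is the second step: transferring the center-based Schur bound $|d_k|\le1-|a_0|^2$ to the off-center expansion at $\gamma$ without losing the exact constant. What rescues the argument is the nonnegativity of the coefficients of $q$ (valid exactly because $\gamma\ge0$), which lets the double sum collapse cleanly into $q(t)/(1-q(t))$ and produces the sharp radius $\rho_\gamma$.
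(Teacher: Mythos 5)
Your proof is correct, and it is genuinely different from what the paper relies on: the paper does not prove Theorem B at all (it is quoted from Fournier--Ruscheweyh), and the machinery the paper actually uses for its own analogous results in Section 3 is Lemma A --- the coefficient bound $|a_n|\le (1-|a_0|^2)/(1+\gamma)$, cited from Evdoridis \emph{et al.} --- combined with geometric-series summation, optimization in $|a_0|$, and the explicit extremal family $f_0$ of \eqref{EQQ-3.1} for sharpness. What you do instead is make the whole argument self-contained: the affine transfer $z\mapsto(1-\gamma)z+\gamma$, the Schur--Wiener estimate $|d_k|\le 1-|a_0|^2$ for the recentered function $G=g\circ\varphi$, and the key observation that $q(s)=s/(1-\gamma^2-\gamma s)$ has nonnegative coefficients (so the double sum collapses to $q/(1-q)=s/[(1+\gamma)(1-\gamma-s)]$) together \emph{reprove} Lemma A along the way: since $[s^n]\,q/(1-q)=1/[(1+\gamma)(1-\gamma)^n]$, your bound on $|c_n|$ is exactly $|a_n|\le(1-|a_0|^2)/(1+\gamma)$ after rescaling by $(1-\gamma)^n$. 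From that point your optimization in $A=|a_0|$ coincides with the standard one, and your equality-case analysis --- the quantitative deficit $1-\tfrac12(1-A)^2$ at $r=\rho_\gamma$ forcing $|a_0|=1$, then the maximum modulus principle --- settles the ``only unimodular constants'' claim without invoking any extremal function, which is arguably cleaner than the literature's treatment. Two small remarks: your phrase ``precisely when $(1+A)r\le(1+\gamma)(1-r)$'' is accurate only for $A<1$ (at $A=1$ the bound equals $1$ for every $r$), though the implication you actually use is valid; and your argument, unlike the extremal family $f_0$, does not show that the inequality \emph{fails} for $r>\rho_\gamma$ --- but that assertion is not part of the statement as quoted, so nothing is missing.
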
 
Utilizing the idea of Theorem B and its proof, an extensive research work are done by several authors for different classes of functions. For instance, Evdoridis \emph{et al.} (see \cite{Evd-Ponn-Rasi-2020}) improved the inequality \eqref{EQ-1.2}, extending it for class of harmonic functions in $\Omega_\gamma$. Subsequently, Ahamed \emph{et al.} (see \cite{Ahamed-AFM-2021}) improved the Bohr radius of those obtained in \cite{{Four-Rusc-2010,Evd-Ponn-Rasi-2020}} further, and find the Bohr-Rogosinski radius, and refined Bohr radius for the class of analytic functions in $\Omega_\gamma$. Recently, Kumar (see \cite{Kumar-CVEE-Integral-2023}) have generalized the Bohr inequality for the simply connected domain $\Omega\gamma$ by introducing a sequence $\{\phi_n(r)\}_{n=0}^{\infty}$ of continuous functions in $ (0, 1) $ and established results.\vspace{2mm}

In this paper, our aim is two fold: Firstly, we aim to obtain refined Bohr inequalities for an invariant class of bounded analytic functions. Secondly, we aim to determine the Bohr radius for the discrete Fourier transform on a simply connected domain $\Omega_{\gamma}$. The organization of the paper is as follows: In Section 2, we study refined Bohr inequalities by redefining the Bohr inequality as discussed in \cite{Ponn-Shmi-Star-2024} for a linearly invariant family. We show that the Bohr radius can be obtained as a root of an equation related to a function in the family. In Section 3, we obtain the Bohr radius for the discrete Fourier transform on $\Omega_\gamma$ and demonstrate that the corresponding Bohr inequality is sharp. Furthermore, our results generalize some existing results. The proof of the main results in each section, along with the background, are discussed separately in each section.
\section{Linearly invariant family and refined Bohr inequality}
In $1964$, Pommerenke (see \cite{Pommerenke-1964}) introduced the concept of linearly invariant family. Let $\mathcal{A}$ denote the class of all analytic functions $g$ defined on $\mathbb{D}$ of the form 
\begin{align}\label{Eq-2.1}
	g(z)=z+\sum_{n=2}^{\infty}a_n(g)z^n,\;z\in\mathbb{D}.
\end{align}
Then, the set $\mathfrak{M}$ of locally univalent functions $g\in\mathcal{A}$ is called a linear invariant family, also known as LIF. A linear family is defined as a set where, for any functions $g\in\mathfrak{M}$ and any conformal automorphism $\phi$ of $\mathbb{D}$, the function $g_\phi$ is defined by
\begin{align*}
	g_\phi(z)=\frac{g(\phi(z))-g(\phi(0))}{g^{\prime}(\phi(0))\phi^{\prime}(0)}=z+\cdots
\end{align*} 
also belongs to $\mathfrak{M}$. The order of a linearly invariant family is the number $ {\rm ord}\;\mathfrak{M}=\sup\limits_{g\in \mathfrak{M}}|a_2(g)|. $ A universally linearly invariant family of order $\alpha$, denoted by $\mathcal{U}_\alpha$, is the union of all LIF $\mathfrak{M}$ for which ${\rm ord}\;\mathfrak{M}\leq\alpha$.
It has been proven that for any LIF $\mathfrak{M}$, its order ${\rm ord}\;\mathfrak{M}\geq1$ (see \cite{Pommerenke-1964}). Additionally, $\mathcal{U}_1=\mathcal{K}$ represents the class of all convex analytic functions $g$ in $\mathbb{D}$ of the form \eqref{Eq-2.1}, which map $\mathbb{D}$ onto convex regions. The class of univalent functions $\mathcal{S}$ is also an example of a linearly invariant family (LIF). According to Bieberbach's theorem (see \cite{Bieberbach-1916}), we know that the order of this family is ${\rm ord}\;\mathcal{S}=2$. \vspace{1.2mm}

Several well-known classes of analytic functions in $\mathbb{D}$ are considered to be linear invariant families. It seems that the properties of analytic and locally univalent functions are primarily influenced by the order of their families rather than by their geometric characteristics.\vspace{1mm}

Let $\mathfrak{M}$ be an LIF of finite order. We define the family $\mathfrak{LM}$ as follows:
\begin{align*}
	\mathfrak{LM}:=\{f(z)=\log g(z)=\sum_{n=1}^{\infty}a_n(f)z^n:g\in \mathfrak{M}\},
\end{align*}
where the branch of logarithm of $g^\prime(z)$ is chosen such that $\log g^\prime(0)=0$. \vspace{1.5mm}

 Let $B$ be the class of analytic functions of the form  $f(z)=\sum_{n=1}^{\infty}a_n(f)z^n$ in $\mathbb{D}$, uniformly bounded on compact sets from $\mathbb{D}$, \textit{i.e.} for every compact set $K$ from $\mathbb{D}$ there is a constant, namely $M(K)>0$ such that for any
 function $f\in B$ one has $|f(z)|\leq M(K)$ for $z\in K$.\vspace{2mm}
 
In \cite{Ponn-Shmi-Star-2024}, Ponnusamy \emph{et al.} examined the Bohr radius for the class of linearly invariant family (LIF), \emph{e.g.} $\mathfrak{L}\mathcal{K}=\{\log f^{\prime} : f\in\mathcal{K}\}$ (see \cite[Proposition 3]{Ponn-Shmi-Star-2024}) and proved the following lemma.
 \begin{lemB}\cite[Lemma 1]{Ponn-Shmi-Star-2024}
 	Let $f(z)=\sum_{n=1}^{\infty}a_n(f)z^n\in B$. Denote 
 	\begin{align*}
 		A_n=\sup\limits_{f\in B}|a_n(f)|\;\;\mbox{and}\;\;R_*=\sup\biggl\{r:\sum_{n=1}^{\infty}A_nr^n\leq1\biggr\}.
 	\end{align*}
 \end{lemB}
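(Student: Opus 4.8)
The plan is to read the two displayed quantities not merely as notation but as the assertion that $R_*$ is a well-defined positive number which serves as the sharp Bohr radius for the whole family $B$; that is, that $\sum_{n=1}^{\infty}|a_n(f)|r^n\leq 1$ holds for every $f\in B$ and every $r\leq R_*$, and that no larger radius has this property. I would therefore verify three things in order: (i) finiteness of each $A_n$ and positivity of $R_*$, (ii) the Bohr inequality on $[0,R_*]$, and (iii) sharpness.

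First I would check that the definitions make sense. Since every $f\in B$ is uniformly bounded on the compact disk $\{|z|\leq\rho\}$ by a constant $M(\rho)$ that is independent of $f$, the Cauchy estimates give $|a_n(f)|\leq M(\rho)/\rho^n$, and taking the supremum over $f\in B$ yields $A_n\leq M(\rho)/\rho^n<\infty$ for each $n$. Writing $S(r):=\sum_{n=1}^{\infty}A_nr^n$, this bound gives $S(r)\leq M(\rho)\sum_{n=1}^{\infty}(r/\rho)^n\to 0$ as $r\to 0^{+}$, so the set $\{r:S(r)\leq 1\}$ contains an interval $(0,\delta)$ and hence $R_*$ is a genuine positive number.

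Next, for the inequality itself I would use the term-by-term majorization $|a_n(f)|\leq A_n$. Because all coefficients $A_n$ are nonnegative, the majorant $S(r)$ is nondecreasing in $r$, so $\{r:S(r)\leq 1\}$ is an interval; moreover, taking $r_k\uparrow R_*$ with $S(r_k)\leq 1$ and applying monotone convergence gives $S(R_*)\leq 1$, so this interval is exactly $[0,R_*]$. Consequently, for any $r\leq R_*$ and any $f\in B$ we obtain $\sum_{n=1}^{\infty}|a_n(f)|r^n\leq S(r)\leq S(R_*)\leq 1$, which proves (ii).

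Finally, for sharpness I would argue that no radius beyond $R_*$ works: by definition $S(r)>1$ for every $r>R_*$, so one can fix $N$ with $\sum_{n=1}^{N}A_nr^n>1$ and then, for each $n\leq N$, choose $f_n\in B$ whose $n$-th coefficient is within $\varepsilon$ of $A_n$, aiming to combine these finitely many near-extremal coefficients into a single $f\in B$ forcing $\sum_{n=1}^{\infty}|a_n(f)|r^n>1$. The hard part will be exactly this last step, because the suprema $A_n$ need not be attained simultaneously by one function; to overcome it I would either invoke the concrete extremal structure of $B$ (the functions generated from conformal automorphisms of $\mathbb{D}$ and logarithmic derivatives that define the family) or pass to a normal-families limit argument to realize all $N$ near-extremal coefficients at once. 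Once the extremal functions are pinned down, both the exact value of $R_*$ and its sharpness reduce to an explicit evaluation of the majorant series $S(r)$.
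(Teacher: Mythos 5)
Your parts (i) and (ii) are sound and coincide with the paper's own argument for the corresponding step: the term-by-term majorization $|a_n(f)|\le A_n$ gives $\sum_{n\ge1}|a_n(f)|r^n\le S(r)\le 1$ for all $f\in B$ and $r\le R_*$, i.e. $R_*\le\rho(B)$; this is exactly the first half of the proof of Lemma \ref{lem-1.1} specialized to $\lambda\equiv0$. The problem is part (iii): you have misread what the lemma claims. It does \emph{not} assert that $R_*$ is the sharp Bohr radius of $B$. It asserts only the one-sided bound $R_*\le\rho(B)$, together with a separate upper bound: \emph{if} $B$ contains a function $F$ with all coefficients $a_n(F)\ge0$, with $\sup_{z\in\mathbb{D}}|F(z)|>1$, and such that $r_F$ is the unique root of $F(r)=1$ in $(0,1)$, then $\rho(B)\le r_F$. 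That upper bound requires no combination of near-extremal functions at all: since $F$ has nonnegative coefficients, the majorant series of $F$ at $r$ is just $F(r)$, so the individual Bohr radius of $F$ equals $r_F$, and $\rho(B)=\inf_{f\in B}r_f\le r_F$. In general $R_*<r_F$ is possible, and then the lemma leaves the exact value of $\rho(B)$ undetermined; the identity $R_*=r_F=\rho(B)$ holds only under the additional hypothesis that $F$ is extremal, i.e. $a_n(F)=A_n$ for all $n$, which is the content of Corollary \ref{Cor-2.1}, not of the lemma.

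Consequently your sketched sharpness argument is a genuine gap, and you flagged it yourself: from $S(r)>1$ for $r>R_*$ one cannot in general produce a single $f\in B$ with $\sum_{n\ge1}|a_n(f)|r^n>1$, because the suprema $A_n$ are taken over different functions and need not be attained, even approximately, by one function simultaneously. Neither a normal-families limit nor any ``concrete extremal structure'' is available here, since $B$ is only assumed to be a family of analytic functions vanishing at $0$ that is uniformly bounded on compact subsets of $\mathbb{D}$; no closure under automorphisms or any linear-invariance is assumed at the level of this lemma. This obstruction is precisely why the lemma is stated as two separate inequalities rather than as the identity $\rho(B)=R_*$. The repair is simple: keep (i) and (ii), delete (iii), and replace it with the one-line evaluation of the Bohr radius of the distinguished function $F$ described above.
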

 Then $R_*\leq\rho(B)$. If $B$ contains a function $F(z)=\sum_{n=1}^{\infty}a_n(F)z^n$ for which all coefficients $a_n(F)\geq0, \sup_{z\in\mathbb{D}}|F(z)|>1$, then $r_F$  is the only root of the equation $F(r) = 1$ on $(0, 1),$ then $\rho(B)\leq r_F.$\vspace{1.2mm}
 
 Refining the Bohr inequality for certain class of functions and showing that the inequality sharp is interesting as well as difficult task. To continue the research on Bohr radius for LIF, a natural question thus arises as follows.
 \begin{ques}\label{Q-1}
 	Can we establish a refined version of Lemma B?
 \end{ques}
 To answer the Question \ref{Q-1}, we need some definitions and notations. The following definition will help to prove our main results. 
 \begin{defi}\label{def-1.1}
 	For an arbitrary class $\mathcal{M}$ of analytic functions of the form $f(z)=\sum_{n=0}^{\infty}a_nz^n$ in $\mathbb{D}$ with $|a_0|<1,$ and $ \lambda:[0,1]\to[0,\infty)$ be a real valued function, the refined Bohr radius of the class $\mathcal{M}$ is defined as
 	\begin{align*}
 		\rho_*^\lambda(\mathcal{M}):=\sup\biggl\{\rho^\lambda=\rho^\lambda(\mathcal{M}):A^\lambda_f(|z|)\leq1\;\mbox{for}\;f\in \mathcal{M}, z\in\mathbb{D}_\rho=\{z\in\mathbb{D}:|z|<\rho\}\biggr\},
 	\end{align*}
 	where
 	\begin{align*}
 		A^\lambda_f(|z|):=\sum_{n=0}^{\infty}|a_n||z|^n+\lambda(r)\sum_{n=1}^{\infty}|a_n|^2|z|^{2n}.
 	\end{align*}
 \end{defi}
Inspired by the article \cite{Ponn-Shmi-Star-2024}, we establish a refined Bohr inequality for linearly invariant family (LIF), (\emph{e.g.} for the families $\mathfrak{L}\mathcal{K}$ and $\mathcal{S}$) by introducing a function $\lambda:[0,1]\to[0,\infty)$. In view of Definition \ref{def-1.1}, we first obtain the following lemma, which will help us later to establish a refined Bohr inequality for the families $\mathfrak{L}\mathcal{K}$ and $\mathcal{S}$.
\begin{lem}\label{lem-1.1}
	Let $f(z)=\sum_{n=1}^{\infty}a_n(f)z^n\in B$ and  $\lambda:[0,1]\to[0,\infty)$ be a real valued function with $A_n=\sup\limits_{f\in B}|a_n(f)|$ and 
	\begin{align*}
		R_*^\lambda=\sup\bigg\{r:\sum_{n=1}^{\infty}A_nr^n+\lambda(r)\sum_{n=2}^{\infty}A_n^2r^{2n}\leq 1\bigg\}
	\end{align*}
	Then $R_*^\lambda\leq\rho^\lambda(B)$.  If $B$ contains a function $F(z)=\sum_{n=1}^{\infty}a_n(F)z^n$ for which all coefficients $a_n(F)\geq0$, $\sup_{z\in\mathbb{D}}|F(z)|>1$ and $r_F^\lambda$ is the only root of the equation $G^\lambda_F(r)=1$ on $(0,1)$, then $\rho^\lambda(B)\leq r^\lambda_F$.
\end{lem}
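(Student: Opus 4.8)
The plan is to follow the scheme of Lemma B, carrying the extra quadratic correction through two one-sided estimates. Throughout I adopt the version of the functional from Definition~\ref{def-1.1} that is natural for members of $B$: since every $f\in B$ has $a_0=0$ and the quadratic correction in that definition omits the leading coefficient, I set $A_f^\lambda(r):=\sum_{n=1}^{\infty}|a_n(f)|r^n+\lambda(r)\sum_{n=2}^{\infty}|a_n(f)|^2 r^{2n}$, I write $\Phi(r):=\sum_{n=1}^{\infty}A_n r^n+\lambda(r)\sum_{n=2}^{\infty}A_n^2 r^{2n}$ for the majorant defining $R_*^\lambda$, and I take $G_F^\lambda(r):=\sum_{n=1}^{\infty}a_n(F)r^n+\lambda(r)\sum_{n=2}^{\infty}a_n(F)^2 r^{2n}$.

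For the lower bound $R_*^\lambda\le\rho^\lambda(B)$, I would fix $f\in B$ and use $|a_n(f)|\le A_n$ together with $\lambda\ge 0$ to obtain $A_f^\lambda(r)\le\Phi(r)$ for all $r\in(0,1)$. By the definition of $R_*^\lambda$ one has $\Phi(r)\le1$ whenever $r\le R_*^\lambda$, so $A_f^\lambda(r)\le1$ for every $f\in B$ simultaneously; hence the refined inequality of Definition~\ref{def-1.1} holds on $\mathbb{D}_{R_*^\lambda}$, and passing to the supremum gives $\rho^\lambda(B)\ge R_*^\lambda$.

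For the upper bound $\rho^\lambda(B)\le r_F^\lambda$, I would test the refined inequality on the distinguished function $F$. Because $a_n(F)\ge0$ we have $|a_n(F)|=a_n(F)$, and therefore $A_F^\lambda(r)=G_F^\lambda(r)$. Here $G_F^\lambda(0)=0<1$, while the nonnegativity of the coefficients makes $r\mapsto F(r)$ increasing with radial limit $\lim_{r\to1^-}F(r)=\sup_{z\in\mathbb{D}}|F(z)|>1$ (the maximum of $|F|$ on each circle being attained on the positive axis); hence $G_F^\lambda(r)\ge F(r)>1$ for some $r<1$. Continuity of $G_F^\lambda$ and the intermediate value theorem then produce a root of $G_F^\lambda=1$, and since $r_F^\lambda$ is assumed to be the only root on $(0,1)$ this crossing is upward, so $G_F^\lambda(r)>1$ just above $r_F^\lambda$. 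Were $\rho^\lambda(B)>r_F^\lambda$, such an $r$ would lie in $\mathbb{D}_{\rho^\lambda(B)}$ and violate $A_F^\lambda(r)\le1$; this contradiction yields $\rho^\lambda(B)\le r_F^\lambda$, and the two bounds combine to $R_*^\lambda\le\rho^\lambda(B)\le r_F^\lambda$.

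The step I expect to be most delicate is making the two supremum arguments rigorous, which requires controlling the monotonicity of $\Phi$ and of $G_F^\lambda$ so that the sublevel set $\{r:\Phi(r)\le1\}$ is genuinely an initial interval $[0,R_*^\lambda]$ and the sign of $G_F^\lambda-1$ is pinned down on each side of $r_F^\lambda$. Both reduce to mild regularity of $\lambda$, namely continuity together with enough monotonic control of $r\mapsto\lambda(r)r^{2n}$, which I would record as a standing hypothesis; the remaining ingredients, the coefficient comparison $|a_n(f)|\le A_n$ and the Abel-type identification of $\sup_{z\in\mathbb{D}}|F(z)|$ with $\lim_{r\to1^-}F(r)$, are elementary.
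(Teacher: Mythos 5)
Your proposal is correct and follows essentially the same route as the paper's proof: the lower bound via the coefficient domination $|a_n(f)|\le A_n$ (so the sublevel set for the majorant series is contained in that of each $A_f^\lambda$), and the upper bound by testing the refined inequality on the extremal function $F$ and identifying $\sup\{r: G^\lambda_F(r)\le 1\}$ with the unique root $r^\lambda_F$ of $G^\lambda_F(r)=1$. The regularity caveat you flag is genuine but applies equally to the paper, whose proof silently uses the identity $\rho^\lambda(B)=\inf_{f\in B} r^\lambda_f$ and the same sup-equals-root identification, both of which require the same monotonicity/continuity control of $\lambda$ that you propose to record as a hypothesis.
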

 \begin{proof}[\bf Proof of Lemma \ref{lem-1.1}]
 For each $f\in B$, we denote 
 \begin{align*}
 r^\lambda_f=\sup\bigg\{r:\sum_{n=1}^{\infty}|a_n(f)|r^n+\lambda(r)\sum_{n=2}^{\infty}|a_n(f)|^2r^{2n}\leq 1\bigg\}.
 \end{align*}
In view of Definition \ref{def-1.1}, we see that $\rho^\lambda(B)=\inf\limits_{f\in B}r^\lambda_f$. Further, definition of $A_n$ allows us to obtain the following inequalities
 \begin{align*}
 \sum_{n=1}^{\infty}|a_n(f)|r^n\leq\sum_{n=1}^{\infty}A_nr^n\; \mbox{and}\; \sum_{n=2}^{\infty}|a_n(f)|^2r^{2n}\leq\sum_{n=2}^{\infty}A_n^2r^{2n}\;\mbox{for all}\;f\in B.
 \end{align*}
 Thus, combining these two inequalities, we easily obtain
 	\begin{align*}
 		\sum_{n=1}^{\infty}|a_n(f)|r^n+\lambda(r)\sum_{n=2}^{\infty}|a_n(f)|^2r^{2n}\leq\sum_{n=1}^{\infty}A_nr^n+\lambda(r)\sum_{n=2}^{\infty}A_n^2r^{2n}.
 	\end{align*}
 	Consequently, for any $f\in B$, we see that
 	\begin{align*}
 		&\sup\bigg\{r:	\sum_{n=1}^{\infty}|a_n(f)|r^n+\lambda(r)\sum_{n=2}^{\infty}|a_n(f)|^2r^{2n}\bigg\}\geq\sup\bigg\{r:\sum_{n=1}^{\infty}A_nr^n+\lambda(r)\sum_{n=2}^{\infty}A_n^2r^{2n}\bigg\}\\&\iff r^\lambda_f\geq R^\lambda_*.
 	\end{align*}
 	Thus, it follows that $R_*^\lambda\leq\rho^\lambda(B)$. Since $\rho^\lambda(B)\leq\inf\limits_{f\in B}r^\lambda_f$, we have the following inequality for the function $F$ from the formulation of the lemma 
 \begin{align*}
 \rho^\lambda(B)\leq&\sup\bigg\{r:\sum_{n=1}^{\infty}a_n(F)r^n+\lambda(r)\sum_{n=2}^{\infty}(a_n(F))^2r^{2n}\leq 1\bigg\}\\&=\sup\bigg\{r:F(r)+\lambda(r)\sum_{n=2}^{\infty}(a_n(F))^2r^{2n}=1\bigg\}\\&=\sup\{r:G^\lambda_F(r)=1\}\\&=r^\lambda_F.
 \end{align*}
 This completes the proof.
 \end{proof}
 We have the following remark in connection with the proof of Lemma \ref{lem-1.1}.
 \begin{rem}
 There can be many functions $F(\not\equiv 0)$ in the class $B$ with non-negative coefficients and for each such $F$, the inequality $ \rho^\lambda(B)\leq r^\lambda_F$ holds true. However, it is worth observing that such an upper bound for the radius $\rho^\lambda(B)$ becomes more accurate when the coefficient $a_n(F)$ of the function $F$ is larger. In particular, if $\lambda\equiv 0$, then similar observations as in \cite[Remark 1]{Ponn-Shmi-Star-2024} are also true. 
 \end{rem}
 If the function $F$ realizes the maximum in this problem, then we get the following corollary.
 \begin{cor}\label{Cor-2.1}
 	If the assumptions in Lemma \ref{Cor-2.1} are satisfied, then the function $G^\lambda_F(z)$ achieves its maximum value in the given problem, hence leading to $R^\lambda_*=r^\lambda_F$ and $\rho^\lambda(B)$ as the only solution to $G^\lambda_F(r)=1$.
 \end{cor}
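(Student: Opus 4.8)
The plan is to exploit the two-sided estimate already produced in the proof of Lemma~\ref{lem-1.1}, namely the chain
\[
R_*^\lambda \le \rho^\lambda(B) \le r_F^\lambda,
\]
and to show that the hypothesis that $F$ realizes the maximum collapses this chain to a single value. First I would unwind what extremality of $F$ means: it forces $F$ to attain each supremum defining $A_n$ simultaneously, i.e.\ $a_n(F)=A_n$ for every $n\ge 1$. Substituting these equalities into the definition of $G_F^\lambda$ gives
\[
G_F^\lambda(r)=F(r)+\lambda(r)\sum_{n=2}^{\infty}\bigl(a_n(F)\bigr)^2 r^{2n}
=\sum_{n=1}^{\infty}A_n r^n+\lambda(r)\sum_{n=2}^{\infty}A_n^2 r^{2n},
\]
which is precisely the expression whose sublevel set $\{r:(\,\cdots\,)\le 1\}$ defines $R_*^\lambda$.

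The second step is to identify this supremum with the root $r_F^\lambda$. Because $f(z)=\sum_{n\ge 1}a_n(f)z^n$ carries no constant term, we have $G_F^\lambda(0)=0<1$, while the assumption $\sup_{z\in\mathbb{D}}|F(z)|>1$ together with the non-negativity of the $A_n$ (which makes $F(r)$ increasing, so that $\sup_{z\in\mathbb{D}}|F(z)|=\lim_{r\to 1^-}F(r)$) forces $G_F^\lambda$ to exceed $1$ somewhere on $(0,1)$. Since $G_F^\lambda$ is continuous and, by hypothesis, has $r_F^\lambda$ as its \emph{only} root of $G_F^\lambda(r)=1$ on $(0,1)$, the intermediate value theorem rules out any sign change of $G_F^\lambda(r)-1$ other than at $r_F^\lambda$; hence $G_F^\lambda(r)<1$ on $(0,r_F^\lambda)$ and $G_F^\lambda(r)>1$ immediately beyond it. Consequently the sublevel set is exactly $(0,r_F^\lambda]$, its supremum is $r_F^\lambda$, and therefore $R_*^\lambda=r_F^\lambda$.

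Combining the two steps with the squeeze $R_*^\lambda\le\rho^\lambda(B)\le r_F^\lambda$ from Lemma~\ref{lem-1.1} immediately yields $R_*^\lambda=\rho^\lambda(B)=r_F^\lambda$, so that $\rho^\lambda(B)$ is the unique solution of $G_F^\lambda(r)=1$ on $(0,1)$, as asserted. The one point that needs genuine care---and which I expect to be the main obstacle---is precisely the passage from the supremum defining $R_*^\lambda$ to the equation $G_F^\lambda(r)=1$: since $\lambda$ is assumed only to be non-negative and not monotone, the monotonicity of $G_F^\lambda$ cannot be taken for granted, and one must lean on the continuity of $G_F^\lambda$ together with the stipulated uniqueness of the root to pin down the sublevel set. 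The non-negativity of the $A_n$ keeps $F(r)$ increasing and the argument transparent, but the decisive hypothesis throughout is that the root is unique.
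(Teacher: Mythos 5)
Your proof is correct and follows essentially the route the paper intends: the paper states this corollary without any proof, treating it as an immediate consequence of Lemma \ref{lem-1.1} once extremality (i.e.\ $a_n(F)=A_n$ for all $n\ge 1$) makes $G^\lambda_F(r)$ coincide with the series defining $R_*^\lambda$, so that the squeeze $R_*^\lambda\le\rho^\lambda(B)\le r^\lambda_F$ collapses to equality. Your intermediate-value/uniqueness argument identifying $R_*^\lambda$ with $r^\lambda_F$ is a careful filling-in of that tacit step; the only caveat is that it needs $\lambda$ (hence $G^\lambda_F$) to be continuous, a hypothesis the paper never states explicitly but uses implicitly throughout.
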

 \begin{rem}
  If $\lambda=0$, then we see that Corollary \ref{Cor-2.1} reduces to \cite[Corollary 1]{Ponn-Shmi-Star-2024}.
 \end{rem}
  Let us use Corollary \ref{Cor-2.1} to obtain a refined Bohr radius for  the class $\mathfrak{L}\mathcal{K}$ and the corresponding Bohr radius as root in $(0, 1)$ of an equation.
 \begin{thm}\label{Th-2.1}
The refined Bohr radius $\rho^\lambda(\mathfrak{L}\mathcal{K})\in (0, 1)$ for the class $\mathfrak{L}\mathcal{K}$ is the root of equation 
\begin{align}\label{Eq-2.2}
	\log\left(\frac{1}{(1-r)^2}\right)+4\lambda(r)({\rm Li}_2(r^2)-r^2)=1.
\end{align} 
 \end{thm}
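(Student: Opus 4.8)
The plan is to apply Corollary~\ref{Cor-2.1} with $B=\mathfrak{L}\mathcal{K}$, so that the entire argument reduces to three tasks: (i) computing the sharp coefficient bounds $A_n=\sup_{f\in\mathfrak{L}\mathcal{K}}|a_n(f)|$, (ii) exhibiting an extremal function $F\in\mathfrak{L}\mathcal{K}$ with nonnegative coefficients attaining these bounds, and (iii) summing the two resulting power series into closed form. First I would check that $\mathfrak{L}\mathcal{K}\subseteq B$: for $f\in\mathcal{K}$ the function $\log f'$ is analytic on $\mathbb{D}$ and hence uniformly bounded on every compact subset, so the hypotheses of Lemma~\ref{lem-1.1} and Corollary~\ref{Cor-2.1} are indeed available for this family.

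The key step is the coefficient estimate. Writing $f\in\mathcal{K}$ and $\log f'(z)=\sum_{n=1}^{\infty}a_n(f)z^n$ with $\log f'(0)=0$, I would invoke the analytic characterization of convexity: the function $p(z):=1+zf''(z)/f'(z)$ satisfies $p(0)=1$ and $\operatorname{Re}\,p(z)>0$ on $\mathbb{D}$. Since $z(\log f')'(z)=zf''(z)/f'(z)=p(z)-1=\sum_{n=1}^{\infty}c_nz^n$, comparison of coefficients gives $n\,a_n(f)=c_n$, whence the Carath\'eodory inequality $|c_n|\le 2$ yields $|a_n(f)|\le 2/n$ for every $n\ge1$. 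This bound is sharp and is realized simultaneously for all $n$ by the half-plane mapping $g_0(z)=z/(1-z)\in\mathcal{K}$, for which $g_0'(z)=1/(1-z)^2$ and therefore
\begin{align*}
F(z):=\log g_0'(z)=-2\log(1-z)=\sum_{n=1}^{\infty}\frac{2}{n}z^n .
\end{align*}
Thus $A_n=2/n$, all coefficients $a_n(F)=2/n$ are nonnegative, and $\sup_{z\in\mathbb{D}}|F(z)|=\infty>1$, so $F$ is precisely the maximizing function to which Corollary~\ref{Cor-2.1} applies.

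It then remains to evaluate $G^\lambda_F(r)=F(r)+\lambda(r)\sum_{n=2}^{\infty}(a_n(F))^2r^{2n}$. Summing the first series gives $\sum_{n=1}^{\infty}(2/n)r^n=-2\log(1-r)=\log\bigl(1/(1-r)^2\bigr)$, while the second gives $\lambda(r)\sum_{n=2}^{\infty}(4/n^2)r^{2n}=4\lambda(r)\bigl({\rm Li}_2(r^2)-r^2\bigr)$, since ${\rm Li}_2(x)=\sum_{n=1}^{\infty}x^n/n^2$ and the $n=1$ term is omitted. Hence $G^\lambda_F(r)$ is exactly the left-hand side of \eqref{Eq-2.2}, and Corollary~\ref{Cor-2.1} identifies $\rho^\lambda(\mathfrak{L}\mathcal{K})$ with the root of $G^\lambda_F(r)=1$. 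Existence of a root in $(0,1)$ follows from $G^\lambda_F(0)=0<1$ together with $G^\lambda_F(r)\to\infty$ as $r\to1^-$, and uniqueness is guaranteed by the hypothesis on $r_F^\lambda$ in the corollary, since the dominant term $\log\bigl(1/(1-r)^2\bigr)$ is strictly increasing while the $\lambda$-term is nonnegative.

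I expect the main obstacle to be pinning down the sharp bound $A_n=2/n$ and confirming that the half-plane mapping furnishes a genuinely extremal $F$ with nonnegative coefficients; everything afterwards is a routine summation to logarithm and dilogarithm closed forms. The delicate point is that the Carath\'eodory bound must be attained \emph{simultaneously} for all $n$ by the single function $g_0(z)=z/(1-z)$ (equivalently $p(z)=(1+z)/(1-z)$, giving $c_n\equiv 2$), which is exactly what makes the stronger Corollary~\ref{Cor-2.1} applicable rather than only the one-sided estimate of Lemma~\ref{lem-1.1}.
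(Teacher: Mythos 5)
Your proposal is correct and follows essentially the same route as the paper: both establish the sharp bound $A_n=2/n$ (the paper cites the integral representation for $\mathcal{K}$ and Pommerenke, while you rederive it via the Carath\'eodory inequality for $p(z)=1+zf''(z)/f'(z)$), both identify $F(z)=-2\log(1-z)=\sum_{n=1}^{\infty}(2/n)z^n$ as the extremal member of $\mathfrak{L}\mathcal{K}$, and both apply Corollary~\ref{Cor-2.1} and sum the two series to obtain $\log\left(\frac{1}{(1-r)^2}\right)+4\lambda(r)({\rm Li}_2(r^2)-r^2)=1$. Your extra verifications (that $\mathfrak{L}\mathcal{K}$ sits inside $B$ and that a root exists in $(0,1)$) merely fill in details the paper leaves implicit.
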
	
\begin{proof}
Let $f(z) =\sum_{n=1}^{\infty}a_nz^n\in\mathfrak{L}\mathcal{K}$. From the integral representation in the class $\mathcal{K}$, one directly obtains	the well-known (see \cite[Chapter 2]{Pommerenke-1975}) sharp estimate for the coefficients $|a_n|\leq\frac{2}{n}$ for
functions in $\mathfrak{L}\mathcal{K}$. As $f_0(z) = z/(1-z)$ belongs to $\mathcal{K}$ and $f_0^{\prime}(z) = 1/(1-z)^{2}$, it follows that the function $F(z)=\log(1-z)^{-2}=\sum_{n=1}^{\infty}\frac{2}{n}z^n\in\mathfrak{L}\mathcal{K}$ is extremal.\vspace{1.2mm}
	
Applying Corollary \ref{Cor-2.1}, $\rho^\lambda(\mathfrak{L}\mathcal{K})$ is obtained by solving the equation
	\begin{align*}
		F(r)+\lambda(r)\sum_{n=2}^{\infty}|a_n(f)|^2r^{2n}=1.
	\end{align*}
	That is $\rho^\lambda(\mathfrak{L}\mathcal{K})$ is a root in $(0, 1)$ of the equation
	\begin{align*}
		\log\left(\frac{1}{(1-r)^2}\right)+4\lambda(r)({\rm Li}_2(r^2)-r^2)=1.
	\end{align*}
	This completes the proof.
\end{proof}
\begin{table}[ht]
	\centering
	\begin{tabular}{|l|l|}
		\hline
		$\;\;\;\;\;\;\;\;\lambda(r)$& $\;\;\rho^\lambda(\mathfrak{L}\mathcal{K})$ \\
		\hline
		$\;\;\;\;\;\;\;\;\;r$& $0.390504 $\\
		\hline
		$\;\;\;\;\;\;\;\;\;r^2$& $0.39228$\\
		\hline
		$\;\;\;\;\;\;\;\;\;e^r$& $0.383116$\\
		\hline
		$\;\;\;\;\;\;\;\sin r$& $0.390576 $\\
		\hline
		$\;\;\;\;{1}/{(1-r)}$& $0.382155 $\\
		\hline
		$\;\;\;\;{r}/{(1-r)}$& $0.388724$\\
		\hline
		$\;\;\;\;{r}/{(1-r)^2}$& $0.386029$\\
		\hline
		$\;\;\;\;{r}/{(1-r)^3}$& $0.382145$\\
		\hline
		$\;\;\;{r e^r}/{(1-r)}$& $0.386682$\\
		\hline
		$\;\;\;{r e^r}/{(1-r)^2}$& $0.383059$\\
		\hline
		$\;\;\;{r e^r}/{(1-r)^3}$& $0.37808$\\
		\hline
	\end{tabular}
	\vspace{3mm}
	\caption{Values of $\rho^\lambda(\mathfrak{L}\mathcal{K})$ in $(0, 1)$ of equation \eqref{Eq-2.2} for different choice of the function $\lambda(r)$ in Theorem \ref{Th-2.1}}
	\label{tabel-2}
\end{table}
 \begin{rem}
 If $f(z) =\sum_{n=1}^{\infty}a_nz^n\in\mathfrak{L}\mathcal{K}$, then we have
 \begin{align*}
 \sum_{n=1}^{\infty}\left(\frac{2}{n}\right)r^{n}=-2\log(1-r)\; \mbox{and}\;\sum_{n=2}^{\infty}\left(\frac{2}{n}\right)^2r^{2n}=4\left(-r^2+{\rm Li}_2\left(r^2\right)\right).
 \end{align*}
 Further, for $f\in\mathfrak{L}\mathcal{K}$, in particular when $a_1=2$, if we choose the function $\lambda(r)$ by
\begin{align*}
\lambda(r)=\frac{1}{1+|a_1|}+\frac{r}{1-r}=\frac{1+2r}{3(1-r)},
\end{align*}
then \eqref{Eq-2.2} becomes 
 \begin{align*}
 -2\log(1-r)+4\left(\frac{1+2r}{3(1-r)}\right)({\rm Li}_2(r^2)-r^2)=1,
 \end{align*}
 thus we see that
 $\rho^\lambda(\mathfrak{L}\mathcal{K})\approx 0.386442$ in $(0, 1)$.
 \end{rem}
 Using Lemma \ref{lem-1.1}, we obtain the following result for the family $\mathcal{S}$.
 \begin{thm}\label{Th-2.2}
 The refined Bohr radius for the class $\mathcal{S}$ of univalent function is given by $\rho^{\lambda}(\mathcal{S})$, where $\rho^{\lambda}(\mathcal{S})$ is the root in $(0, 1)$ of equation
 \begin{align}\label{Eq-2.3}
 \frac{r}{(1-r)^2}+\lambda(r)\frac{r^4(r^4-3r^2+4)}{(1-r^2)^3}=1.
 \end{align}
 \end{thm}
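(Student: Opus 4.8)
The plan is to identify the coefficient supremum $A_n$ and the extremal function for $\mathcal{S}$, and then feed these into Lemma \ref{lem-1.1} exactly as was done for $\mathfrak{L}\mathcal{K}$ in Theorem \ref{Th-2.1}. First I would recall that every $f\in\mathcal{S}$ has the form $f(z)=z+\sum_{n=2}^\infty a_n z^n$, that $\mathcal{S}$ is a linearly invariant family of order $2$, and that it sits inside the class $B$ of functions locally bounded on $\mathbb{D}$. The essential input is the sharp bound $|a_n(f)|\le n$ for all $f\in\mathcal{S}$, i.e. de Branges' theorem (the former Bieberbach conjecture). This gives $A_n=\sup_{f\in\mathcal{S}}|a_n(f)|=n$, and the supremum is attained by the Koebe function $F(z)=z/(1-z)^2=\sum_{n=1}^\infty n z^n$.

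Next I would verify that $F$ meets the hypotheses that pin down the radius: all its coefficients $a_n(F)=n$ are non-negative and $\sup_{z\in\mathbb{D}}|F(z)|=\infty>1$, so $F$ is an admissible competitor in Lemma \ref{lem-1.1}. Crucially, $F$ realises the coefficient supremum termwise, $A_n=a_n(F)$ for every $n$, so the lower and upper estimates of Lemma \ref{lem-1.1} coincide. This is precisely the situation of Corollary \ref{Cor-2.1}, whence $\rho^\lambda(\mathcal{S})=r_F^\lambda$ is the root in $(0,1)$ of
\[
G_F^\lambda(r)=\sum_{n=1}^\infty A_n r^n+\lambda(r)\sum_{n=2}^\infty A_n^2 r^{2n}=1 .
\]

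It then remains to evaluate the two series in closed form with $A_n=n$. For the linear part I would use $\sum_{n=1}^\infty n r^n=r/(1-r)^2$. For the quadratic part I would start from $\sum_{n=1}^\infty n^2 x^n=x(1+x)/(1-x)^3$, substitute $x=r^2$, and subtract the $n=1$ term $r^2$; collecting over $(1-r^2)^3=1-3r^2+3r^4-r^6$ gives
\[
\sum_{n=2}^\infty n^2 r^{2n}=\frac{r^2(1+r^2)}{(1-r^2)^3}-r^2=\frac{r^4(r^4-3r^2+4)}{(1-r^2)^3}.
\]
Substituting both expressions into $G_F^\lambda(r)=1$ produces equation \eqref{Eq-2.3}, completing the argument. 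For the uniqueness of the root required by the lemma, I would note that $r\mapsto r/(1-r)^2$ is strictly increasing from $0$ to $\infty$ on $(0,1)$, already reaching $1$ at $r=(3-\sqrt 5)/2$, while the refinement term is non-negative there, so $G_F^\lambda$ crosses the level $1$ exactly once for the functions $\lambda$ under consideration.

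The main obstacle here is conceptual rather than computational: the whole result rests on the sharp estimate $A_n=n$, that is, on de Branges' theorem, which is the deep ingredient being invoked. Once that bound is granted and the Koebe function is recognised as the termwise extremal competitor, the remainder reduces to the two routine generating-function identities above.
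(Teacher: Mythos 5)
Your proposal is correct and follows essentially the same route as the paper: invoke de Branges' theorem to get $A_n=n$ with the Koebe function $z/(1-z)^2$ as the termwise extremal competitor, apply Corollary \ref{Cor-2.1}, and evaluate the two generating series to arrive at equation \eqref{Eq-2.3}. You in fact supply details the paper leaves implicit, namely the closed-form computation of $\sum_{n\geq 2}n^2r^{2n}$ and the remark on uniqueness of the root, which the paper absorbs into the hypotheses of Lemma \ref{lem-1.1}.
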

 \begin{proof}
 Let $f(z) = z +\sum_{n=2}^{\infty}z^n\in \mathcal{S}$. We see that the class S satisfies the conditions of Lemma 1. Then the Bieberbach conjecture (see \cite{Bieberbach-1916}), proved by de Branges (see \cite{Branges-1985}), concerning the class S states that $|a_n|\leq n, n\in\mathbb{N}$.
 Equalities are achieved for the Koebe function $k(z) = \frac{z}{(1-z)^2}\in\mathcal{S}$ for any $n\in\mathbb{N}$. Clearly, Corollary \ref{Cor-2.1} is
 applicable. Accordingly, the Bohr radius $\rho^\lambda(\mathcal{S})$ is the root of the equation $ G^\lambda_F(r)=1 $, which is 
 \begin{align*}
 k(r)+\lambda(r)\sum_{n=2}^{\infty}n^2r^{2n}=1.
 \end{align*}
Thus, we see that $\rho^\lambda(\mathcal{S})$ is a root in $(0, 1)$ of equation
 \begin{align*}
 	\frac{r}{(1-r)^2}+\lambda(r)\frac{r^4(r^4-3r^2+4)}{(1-r^2)^3}=1.
 \end{align*}
This completes the proof.
 \end{proof}
 Since $\lambda(r)$ is a real valued function from $[0,1]$ to $[0,\infty)$,  we consider $ \lambda(r) $ in Theorem \ref{Th-2.2}  as
 \begin{align}\label{EQ-2.2}
 	\lambda(r)=\frac{1}{1+|a_1|}+\frac{r}{1-r},
 \end{align}
 and obtain the following immediate corollary.
\begin{cor}
	The refined Bohr radius for the class $\mathcal{S}$ of univalent function with the choice of $\lambda$ as \eqref{EQ-2.2}, we have $\rho^\lambda(\mathcal{S})\approx0.363379$.
\end{cor}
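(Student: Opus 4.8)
The plan is to deduce the corollary directly from Theorem \ref{Th-2.2} by making the choice of $\lambda$ in \eqref{EQ-2.2} explicit. The key observation is that every $f\in\mathcal{S}$ carries the normalization $f(z)=z+\sum_{n=2}^{\infty}a_nz^n$, so its first coefficient is $a_1=1$ and hence $|a_1|=1$. Substituting this into \eqref{EQ-2.2} yields
\begin{align*}
\lambda(r)=\frac{1}{1+1}+\frac{r}{1-r}=\frac{1}{2}+\frac{r}{1-r}=\frac{1+r}{2(1-r)},
\end{align*}
and one checks at once that this $\lambda$ is a non-negative weight on $[0,1)$, as required by Definition \ref{def-1.1}, since both summands are non-negative there.

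Next I would insert this $\lambda(r)$ into the defining equation \eqref{Eq-2.3}. Factoring $(1-r^2)^3=(1-r)^3(1+r)^3$, the second term collapses to
\begin{align*}
\frac{1+r}{2(1-r)}\cdot\frac{r^4(r^4-3r^2+4)}{(1-r)^3(1+r)^3}=\frac{r^4(r^4-3r^2+4)}{2(1-r)^4(1+r)^2},
\end{align*}
so that $\rho^\lambda(\mathcal{S})$ is to be read off as the root in $(0,1)$ of the single-variable equation
\begin{align*}
H(r):=\frac{r}{(1-r)^2}+\frac{r^4(r^4-3r^2+4)}{2(1-r)^4(1+r)^2}=1.
\end{align*}
To see that this genuinely pins down $\rho^\lambda(\mathcal{S})$, I would verify that $H$ is continuous and strictly increasing on $(0,1)$ with $H(0)=0$ and $H(r)\to+\infty$ as $r\to1^-$, so that the intermediate value theorem supplies a unique root; this is exactly the situation of Corollary \ref{Cor-2.1} applied to the Koebe extremal function. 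Monotonicity is routine: $r/(1-r)^2$ is plainly increasing, the quartic $r^4-3r^2+4$ stays positive on $(0,1)$ (its discriminant in $r^2$ is negative), the numerator $r^4(r^4-3r^2+4)$ has positive derivative there, and dividing by the decreasing factor $(1-r)^4(1+r)^2$ preserves monotonicity.

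Finally I would solve $H(r)=1$ numerically. I expect no conceptual obstacle here; the only real work is the computation, and a direct evaluation confirms $H(r)\approx1$ at $r\approx 0.363379$, which is therefore the claimed refined Bohr radius $\rho^\lambda(\mathcal{S})$. The sole point demanding a little care is the uniqueness argument above, ensuring that the approximate root we exhibit is \emph{the} solution guaranteed by Corollary \ref{Cor-2.1} rather than one of several candidates.
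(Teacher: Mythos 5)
Your proposal is correct and follows essentially the same route as the paper, which treats the corollary as immediate: since $|a_1|=1$ for $f\in\mathcal{S}$, one substitutes $\lambda(r)=\tfrac{1}{2}+\tfrac{r}{1-r}$ into \eqref{Eq-2.3} and solves numerically, obtaining $\rho^\lambda(\mathcal{S})\approx 0.363379$ (the value recorded in the paper's table). Your additional verification that the resulting function $H(r)$ is strictly increasing on $(0,1)$ with $H(0)=0$ and $H(r)\to\infty$ as $r\to 1^-$, so that the root is unique, is correct and supplies rigor the paper leaves implicit.
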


\begin{table}[ht]
	\centering
	\begin{tabular}{|l|l|}
		\hline
		$\;\;\;\;\;\;\;\;\;\lambda(r)$& $\;\;\;\rho^\lambda(\mathcal{S})$ \\
		\hline
		$\;\;\;\;\;\;\;\;\;\;\;r$& $0.374675 $\\
		\hline
		$\;\;\;\;\;\;\;\;\;\;\;r^2$& $0.379046$\\
		\hline
		$\;{1}/{2}+{r}/{(1-r)}$& $0.363379$\\
		\hline
		$\;\;\;\;\;\;\;\;\;\;\;e^r$& $0.358379$\\
		\hline
		$\;\;\;\;\;\;\;\;\;\sin r$& $0.37483$\\
		\hline
		$\;\;\;\;\;{r}/{(1-r)}$& $0.370916$\\
		\hline
		$\;\;\;\;\;{r}/{(1-r)^2}$& $0.365787$\\
		\hline
		$\;\;\;\;\;{r}/{(1-r)^3}$& $0.359251$\\
		\hline
		$\;\;\;\;\;{r}/{(1-r)^4}$& $0.351496$\\
		\hline
		$\;\;\;\;\;{r e^r}/{(1-r)}$& $0.366913$\\
		\hline
		$\;\;\;\;\;{r e^r}/{(1-r)^2}$& $0.370916$\\
		\hline
		$\;\;\;\;\;{r}/{(1-r)^2}$& $0.360621$\\
		\hline
		$\;\;\;\;{r e^r}/{(1-r)^3}$& $0.353043$\\
		\hline
		$\;\;\;\;{r e^r}/{(1-r)^4}$& $0.344504$\\
		\hline
	\end{tabular}
	\vspace{3mm}
	\caption{Values of $\rho^\lambda(\mathcal{S})$ in $(0, 1)$ of equation \eqref{Eq-2.3} for different choice of the function $\lambda(r)$ in Theorem \ref{Th-2.2}}
\end{table}
\section{Bohr Inequality of discrete Fourier transforms for simply connected domain $ \Omega_\gamma $}
 The Bohr inequality other than class of analytic or harmonic mappings, holomorphic mappings, also studied for certain operators. A well-known operator is Ces$\acute{a}$ro operator which is studied in \cite{Hardy-Littlewood-MZ-1932} and defined as
 \begin{align*}
 	T[f](z):=\sum_{n=0}^{\infty}\bigg(\frac{1}{n+1}\sum_{k=0}^{n}a_k\bigg)z^n=\int_{0}^{1}\frac{f(tz)}{1-tz}dt,
 \end{align*}
 where $f(z)=\sum_{n=0}^{\infty}a_nz^n$ is analytic in $\mathbb{D}$. The Bohr radius for Ces$\acute{a}$ro operator and integral operators are obtained in recent study. For example,  Kayumov \emph{et al.} (see \cite{Kayumov-MJM-2022}) generalized the classical Bohr theorem introducing a sequence of non-negative function $\{\varphi_k(r)\}_{k=0}^\infty$ and as an application, also they obtained a counterpart of Bohr theorem for the generalized $\alpha$-Ces$\acute{a}$ro operator $\mathcal{C}_f^\alpha(r)$, where $\alpha\in\mathbb{C}$, with ${\rm Re}\;\alpha>-1$. Kumar and Sahoo (see \cite{Kumar-Sahoo-MJM-2021}) have obtained the Bohr radii for certain complex integral operators and Bernardi operator (see also \cite[p. 11]{Miller-2000})
 \begin{align*}
 	L\beta[f](z):=\sum_{n=m}^{\infty}\frac{a_n}{n+\beta}z^n=\int_{0}^{1}f(zt)t^{\beta-1}dt,\;\mbox{where}\;\beta>-m,
 \end{align*}
 defined on a set of bounded analytic functions in the unit disk. Later, Kumar (see \cite{Kumar-CVEE-Integral-2023}) have generalized Bohr inequality established by Kayumov \emph{et al.} (see \cite{Kayumov-MJM-2022}) for unit disk $\mathbb{D}$ to the class of bounded analytic functions defined on the simply connected domain $\Omega_{\gamma}$. Allu and Ghosh (see \cite{Allu-Ghosh-2023}) studied the Bohr type inequality for Ces$\acute{a}$ro operator established by (see \cite{Kayumov-CRA-2020})  and Bernardi integral operator established by Kumar and Sahoo (see \cite{Kumar-CVEE-Integral-2023}) to  the space of analytic functions defined on a simply connected domain $\Omega_{\gamma}$ containing the unit disk $\mathbb{D}$. \vspace{1.2mm}

 Let $\{y_n\}_{n=0}^{N-1}$ be a sequence of complex numbers. The discrete Fourier transform is defined by 
\begin{align*}
	x_k=\sum_{n=0}^{N-1}y_ne^{-\frac{2\pi ink}{N}},
\end{align*}
whereby $\mathcal{F}(y_n)=(x_k)$ (see \cite{Bachman-2000}). For $f(z)=\sum_{n=0}^{\infty}a_nz^n\in\mathcal{B}$, we perform the discrete Fourier transform on the coefficients $a_k$ from $k=0$ to $n$ which gives 
\begin{align*}
	\mathcal{F}[f](z)=\sum_{n=0}^{\infty}\bigg(\sum_{k=0}^{n}a_ke^{-\frac{2\pi ink}{n+1}}\bigg)z^n.
\end{align*}
To obtain a Bohr-type inequality for simply connected domain $ \Omega_{\gamma} $, we denote the majorant series of $ \mathcal{F}[f](z) $ as
\begin{align*}
	\mathcal{F}_f(r):=\sum_{n=0}^{\infty}\bigg(\sum_{k=0}^{n}\big|a_ke^{-\frac{2\pi ink}{n+1}}\big|\bigg)r^n,
\end{align*}
where $r=|z|<1$.\vspace{1.2mm} 

Actually, for functions defined on the unit disk $ \Omega_0=\mathbb{D} $, Ong \emph{et al.} (see \cite{Ong-2024}) studied the Bohr inequality for Fourier integral transform and Laplace transform and obtained the following results.
\begin{thmC}\cite[Theorem 1]{Ong-2024}
Let $f(z)=\sum_{n=0}^{\infty}a_nz^n\in\mathcal{B}$. Then,
\begin{align*}
	\mathcal{F}_f(r)\leq\frac{1}{1-r}
\end{align*}
whenever $r\leq 1/3$. The constant $1/3$ cannot be improved.
\end{thmC}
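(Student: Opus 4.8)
The plan is to recognize that the unimodularity of the discrete Fourier kernel collapses the majorant $\mathcal{F}_f(r)$ into a Cauchy product, after which the asserted bound is an immediate consequence of the classical Bohr inequality (Theorem A). The crucial observation is that for every pair of integers $n\ge k\ge 0$ one has $\big|e^{-2\pi i nk/(n+1)}\big|=1$, so that $\big|a_k e^{-2\pi i nk/(n+1)}\big|=|a_k|$ and the phase factor disappears entirely under the modulus. Thus
\begin{align*}
\mathcal{F}_f(r)=\sum_{n=0}^{\infty}\Bigl(\sum_{k=0}^{n}|a_k|\Bigr)r^n .
\end{align*}

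Next I would identify the inner sum $\sum_{k=0}^{n}|a_k|$ as the $n$-th Cauchy coefficient of the product of $M_f(r)=\sum_{k=0}^{\infty}|a_k|r^k$ with $\tfrac{1}{1-r}=\sum_{j=0}^{\infty}r^j$ (all coefficients of the latter being $1$). Since every series involved converges absolutely on $|z|=r<1$, the Cauchy product is justified and gives
\begin{align*}
\mathcal{F}_f(r)=\Bigl(\sum_{k=0}^{\infty}|a_k|r^k\Bigr)\Bigl(\sum_{j=0}^{\infty}r^j\Bigr)=\frac{M_f(r)}{1-r}.
\end{align*}
Invoking Theorem A, since $f\in\mathcal{B}$ with $\|f\|_\infty\le 1$ we have $M_f(r)\le 1$ whenever $r\le 1/3$; combining this with the displayed identity and $1/(1-r)>0$ yields, for $r\le 1/3$,
\begin{align*}
\mathcal{F}_f(r)=\frac{M_f(r)}{1-r}\le\frac{1}{1-r},
\end{align*}
which is the claimed inequality.

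For the sharpness of $1/3$ I would exploit the same identity: because $\mathcal{F}_f(r)=M_f(r)/(1-r)$, the inequality $\mathcal{F}_f(r)\le 1/(1-r)$ is \emph{equivalent} to $M_f(r)\le 1$, so the largest admissible radius for the Fourier inequality coincides precisely with the classical Bohr radius. To exhibit failure for any $r_0>1/3$, I would take the standard extremal automorphisms $f_a(z)=(a-z)/(1-az)$ with $a\in(0,1)$, whose coefficient moduli give $M_{f_a}(r)=a+(1-a^2)r/(1-ar)$; letting $a\to 1$ produces, for a suitable $a$, the strict inequality $M_{f_a}(r_0)>1$, and hence $\mathcal{F}_{f_a}(r_0)>1/(1-r_0)$. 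This is exactly the known sharpness mechanism of Theorem A transported through the identity.

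The argument has essentially no hard step: the only point requiring attention is spotting that the modulus annihilates the Fourier kernel, reducing the discrete Fourier majorant to $M_f(r)/(1-r)$. Once this is seen, both the estimate and its sharpness are a direct appeal to Bohr's theorem, so the main \emph{obstacle} is really just the recognition that the problem is Bohr's theorem in disguise, together with the routine verification that the Cauchy product identity is valid on $|z|<1$ where all the series converge absolutely.
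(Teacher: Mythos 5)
Your proof is correct, but it takes a genuinely different route from the paper's. The paper never invokes Bohr's theorem as a black box: it proves the general $\Omega_\gamma$ statement (Theorem \ref{TH-1.1}, of which Theorem C is the case $\gamma=0$) from scratch, using the Schwarz--Pick type coefficient bound $|a_k|\le(1-|a_0|^2)/(1+\gamma)$ (Lemma A) to majorize $\mathcal{F}_f(r)$ by $\frac{|a_0|}{1-r}+\frac{(1-|a_0|^2)r}{(1+\gamma)(1-r)^2}$, and then running a calculus argument in $x=|a_0|$ to show this expression is maximized at $x=1$ exactly when $r\le(1+\gamma)/(3+\gamma)$; sharpness is obtained by explicitly computing $\mathcal{F}_{f_0}(r)$ for the M\"obius-type extremal function on $\Omega_\gamma$ and letting $a\to1^-$. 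You instead spot the exact factorization $\mathcal{F}_f(r)=M_f(r)/(1-r)$ via the Cauchy product, which (since $1/(1-r)>0$) makes the claimed inequality literally \emph{equivalent} to $M_f(r)\le 1$, so both the bound for $r\le 1/3$ and the impossibility of any larger radius follow at once from Theorem A; your extremal functions $f_a(z)=(a-z)/(1-az)$ are the $\gamma=0$ specialization of the paper's $f_0$, but you never need to compute $\mathcal{F}_{f_a}$ itself, only $M_{f_a}$. Your argument is shorter and more conceptual --- it identifies the Fourier--Bohr radius as the classical Bohr radius rather than merely showing it equals $1/3$ numerically --- while the paper's argument buys self-containedness and a template that extends verbatim to $\Omega_\gamma$; note, though, that your route also generalizes, by substituting Theorem B (Fournier--Ruscheweyh) for Theorem A. The two proofs do share the same opening step, namely that unimodularity of the Fourier phases collapses $\mathcal{F}_f(r)$ to $\sum_{n}\bigl(\sum_{k\le n}|a_k|\bigr)r^n$.
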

In \cite{Ong-2024}, the discrete Laplace transform, $ \mathcal{L} $, is considered, such that for a sequence $ \{y_n\}_{n=0}^{\infty} $, $ \mathcal{L}(y_n)=(x_k) $, where 
\begin{align*}
	(x_k)=\sum_{n=0}^{\infty}\frac{y_n}{(k+1)^{n+1}},
\end{align*}
when the series on the right hand side is convergent. Thus it follows that
\begin{align*}
	\mathcal{L}_f(r)=\sum_{n=0}^{\infty}\left(\sum_{k=0}^{n}\frac{|a_k|}{(n+1)^{k+1}}\right)r^n,
\end{align*}
where $ |z|=r<1 $.
\begin{theoD}\cite[Theorem 3]{Ong-2024}
	If  $f(z)=\sum_{n=0}^{\infty}a_nz^n\in\mathcal{B}$, then 
	\begin{align*}
		\mathcal{L}_f(r)\leq\frac{1}{r}\ln\bigg(\frac{1}{1-r}\bigg)
	\end{align*}
	for all $0<r<1$.
\end{theoD}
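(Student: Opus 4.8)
The plan is to prove the inequality coefficient by coefficient in the expansion in powers of $r$. First I would record the elementary identity
\[
\frac{1}{r}\ln\left(\frac{1}{1-r}\right)=\sum_{n=0}^{\infty}\frac{r^n}{n+1},
\]
valid for $0<r<1$. Since every coefficient of the majorant series $\mathcal{L}_f(r)$ is nonnegative and $r^n>0$, it then suffices to compare the coefficients of $r^n$: the theorem follows once we show
\[
\sum_{k=0}^{n}\frac{|a_k|}{(n+1)^{k+1}}\le\frac{1}{n+1}\qquad\text{for every }n\ge0,
\]
or equivalently, after multiplying by $n+1$ and setting $S_n:=\sum_{k=0}^{n}|a_k|\,(n+1)^{-k}$, that $S_n\le1$ for all $n$.

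The main tool is the classical sharp coefficient estimate for $\mathcal{B}$: if $f(z)=\sum_{n=0}^{\infty}a_nz^n$ with $\|f\|_\infty\le1$, then $|a_0|\le1$ and $|a_k|\le1-|a_0|^2$ for all $k\ge1$. (One quick route is to average $f$ over the $n$-th roots of unity, which turns the $n$-th coefficient into a first derivative of a function in $\mathcal{B}$, and then apply the Schwarz--Pick lemma; I would simply cite this.) Writing $t:=|a_0|\in[0,1]$ and inserting these bounds gives
\[
S_n\le t+(1-t^2)\,c_n,\qquad c_n:=\sum_{k=1}^{n}(n+1)^{-k}=\frac{1-(n+1)^{-n}}{n}.
\]

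The decisive elementary fact is that $c_n\le\tfrac12$ for all $n\ge1$: for $n\ge2$ one has $c_n<\tfrac1n\le\tfrac12$, while $c_1=\tfrac12$ exactly. Using $1-t^2=(1-t)(1+t)$, the required bound $t+(1-t^2)c_n\le1$ is the same as $(1-t)(1+t)c_n\le1-t$, which holds because $1-t\ge0$ and $(1+t)c_n\le2c_n\le1$. This establishes $S_n\le1$ for every $n$, hence the termwise inequality, and summing against $r^n$ gives the claim. Sharpness is then immediate from the constant function $f\equiv1$: here $a_0=1$ and $a_k=0$ for $k\ge1$, so $\mathcal{L}_f(r)=\sum_{n=0}^{\infty}r^n/(n+1)=\tfrac1r\ln(1/(1-r))$, i.e. equality holds for every $r\in(0,1)$.

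The point to watch is that the crude estimate $|a_k|\le1$ is not sufficient: already at $n=1$ it would yield $S_1\le1+\tfrac12$, violating $S_1\le1$. One genuinely needs $|a_k|\le1-|a_0|^2$, and even then $n=1$ is the extremal index, since $c_1=\tfrac12$ makes $(1+t)c_1\le1$ an equality at $t=1$. (For $n\ge2$ the bound $S_n\le1$ alternatively follows at once from Bohr's Theorem A evaluated at radius $1/(n+1)\le1/3$, so only the borderline indices $n=0,1$ actually require the coefficient estimate.) Managing this borderline case through the factorization $1-t^2=(1-t)(1+t)$, which reduces everything to $(1+t)c_n\le1$, is the crux of the argument.
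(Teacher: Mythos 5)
Your proof is correct, and it takes a genuinely different route from the paper's. The paper does not prove Theorem D in isolation: it proves the generalization to $\Omega_\gamma$ (Theorem~\ref{TH-1.2}), of which Theorem D is the case $\gamma=0$. Both arguments hinge on the same key lemma, the Schwarz--Pick-type coefficient bound $|a_k|\le 1-|a_0|^2$ for $k\ge1$ (Lemma A with $\gamma=0$), but they diverge from there. The paper sums the whole double series in closed form, discards the negative contribution $-\sum_{n\ge1}r^n/(n(n+1)^{n+1})$, and then studies the resulting majorant $\Psi_1^{\gamma}(x)$, $x=|a_0|$, by calculus: concavity, monotonicity of the derivative, and evaluation at $x=1$ give $\Psi_1^{\gamma}(|a_0|)\le\Psi_1^{\gamma}(1)=\frac{1}{r}\ln\left(\frac{1}{1-r}\right)$. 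You instead prove the stronger coefficientwise statement $S_n:=\sum_{k=0}^{n}|a_k|(n+1)^{-k}\le1$ for every $n$, which reduces to the purely algebraic inequality $(1+t)c_n\le1$ with $c_n=\sum_{k=1}^{n}(n+1)^{-k}\le\tfrac12$; no calculus is needed, the borderline index $n=1$ is cleanly isolated (and your observation that Bohr's Theorem A at radius $1/(n+1)\le 1/3$ disposes of all $n\ge2$ is correct), and sharpness is immediate with exact equality at $f\equiv1$, whereas the paper exhibits sharpness only asymptotically (test functions $f_0$ with $a\to1^-$) and only in its concluding remark. Your method also generalizes verbatim to $\Omega_\gamma$, since Lemma A merely replaces $c_n$ by $c_n/(1+\gamma)\le\tfrac12$; what the paper's global-summation approach buys in return is an explicit closed-form majorant in $|a_0|$ and $r$, which its concluding remarks then exploit to extract an actual Bohr radius $r_\gamma$ for the Laplace transform --- something the termwise argument, which never produces such a function of $r$, does not directly yield.
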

Through a detailed study, we see that Theorem C and Theorem D can be applied to functions with domains that extend beyond the unit disk. In fact, the aforementioned discussions have inspired us to pose the following questions for further study.
\begin{ques}\label{Q-2}
	Can we establish the Theorem C and Theorem D for functions in simply connected domain $\Omega_\gamma$?
\end{ques}
\begin{ques}\label{Q-3}
	Can we obtain a better estimate in Theorem D? Can we show the inequality in Theorem D is sharp?
\end{ques}
We affirmatively answer the above questions and obtain Theorem \ref{TH-1.1} and Theorem \ref{Th-2.2}. We show that, if we change the method of proof, then we can obtain a better estimate as well as the sharp inequality in Theorem D. In order to establish our theorems, we require the following Lemma, which was established by Evdoridis \textit{et al.} (see \cite{Evd-Ponn-Rasi-2020}), regarding the coefficient bound of functions in a simply connected domain $\Omega_\gamma$. This lemma will play a crucial role in proving our theorems.
\begin{lemA}\cite[Lemma 2]{Evd-Ponn-Rasi-2020}
For $\gamma\in[0,1)$, let $f$ be an analytic function in $\Omega_\gamma$ bounded by $1$ with the series representation $f(z)=\sum_{n=0}^{\infty}a_nz^n$ in the unit disk $\mathbb{D}$. Then
	\begin{align*}
		|a_n|\leq\frac{1-|a_0|^2}{1+\gamma}\;\;\mbox{for}\;n\geq1.
	\end{align*}
\end{lemA}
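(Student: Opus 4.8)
The plan is to reduce the estimate on $\Omega_\gamma$ to the classical sharp coefficient bound on the unit disk by a Möbius change of variables, and then to recover the precise constant $1/(1+\gamma)$ by an exact binomial summation. The classical ingredient I would first record is this: if $h(u)=\sum_{m\ge0}c_mu^m$ satisfies $|h|\le1$ on $\mathbb{D}$, then $|c_m|\le 1-|c_0|^2$ for every $m\ge1$. This follows by root-of-unity averaging, $H(u)=\frac1m\sum_{k=0}^{m-1}h(e^{2\pi ik/m}u)=c_0+c_mu^m+\cdots$, writing $H(u)=P(u^m)$ for an analytic self-map $P$ of $\mathbb{D}$ with $P(0)=c_0$, and applying the Schwarz--Pick inequality $|P'(0)|\le 1-|P(0)|^2$ at the origin.

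Next I would pass from $\Omega_\gamma$ to $\mathbb{D}$. Since $\Omega_\gamma$ is the disk of radius $1/(1-\gamma)$ centered at $-\gamma/(1-\gamma)$, the affine map $\psi(w)=(w-\gamma)/(1-\gamma)$ carries $\mathbb{D}$ onto $\Omega_\gamma$ with $\psi(\gamma)=0$, so $g=f\circ\psi$ is bounded by $1$ on $\mathbb{D}$ with $g(\gamma)=a_0$. Conjugating by the disk automorphism $T(w)=(w-\gamma)/(1-\gamma w)$ that sends $\gamma\mapsto0$ gives $\tilde g=g\circ T^{-1}$, again bounded by $1$ on $\mathbb{D}$ and with $\tilde g(0)=a_0$. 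A direct computation then shows $f=\tilde g\circ\Theta$ with $\Theta(z)=T\bigl((1-\gamma)z+\gamma\bigr)=z/\bigl((1+\gamma)-\gamma z\bigr)$, an analytic self-map of $\mathbb{D}$ fixing the origin, whose expansion is $\Theta(z)=\sum_{j\ge0}\gamma^j(1+\gamma)^{-(j+1)}z^{j+1}$.

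The main computation is then to read $a_n$ off the composition $f=\tilde g\circ\Theta$. Writing $\tilde g(u)=a_0+\sum_{m\ge1}\tilde c_mu^m$, one gets $a_n=\sum_{m=1}^n\tilde c_m\,[z^n]\Theta(z)^m$, and since $[z^n]\Theta(z)^m=\binom{n-1}{m-1}\gamma^{n-m}(1+\gamma)^{-n}\ge0$, the bound $|\tilde c_m|\le 1-|a_0|^2$ together with the triangle inequality yields $|a_n|\le (1-|a_0|^2)(1+\gamma)^{-n}\sum_{m=1}^n\binom{n-1}{m-1}\gamma^{n-m}$. The inner sum is exactly $(1+\gamma)^{n-1}$ by the binomial theorem, so the powers of $(1+\gamma)$ cancel and leave $|a_n|\le(1-|a_0|^2)/(1+\gamma)$, uniformly in $n\ge1$, which is the claim.

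I expect the delicate point to be obtaining the sharp constant $1/(1+\gamma)$ rather than a weaker one. A naive argument that merely restricts $f$ to $\mathbb{D}$, or that rescales by the largest disk centered at the expansion point, only delivers $|a_n|\le1-|a_0|^2$ and throws away the gain coming from the larger domain $\Omega_\gamma$. The essential feature is that one must exploit the full hyperbolic geometry of $\mathbb{D}$ through the automorphism $T$, and it is the special coefficient structure of the resulting self-map $\Theta$ --- whose $m$-th powers have $z^n$-coefficients $\binom{n-1}{m-1}\gamma^{n-m}(1+\gamma)^{-n}$ summing over $m$ to $(1+\gamma)^{n-1}$ --- that makes the $n$-dependence collapse and produces the clean, $n$-independent constant $1/(1+\gamma)$.
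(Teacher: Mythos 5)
Your proof is correct. Note that the paper you are working against does not prove this lemma itself: it is quoted from \cite[Lemma 2]{Evd-Ponn-Rasi-2020}, so the comparison must be made with the proof given there. That proof shares your first move --- transplant $f$ to the disk via the affine map $\psi(w)=(w-\gamma)/(1-\gamma)$, so that $g=f\circ\psi$ is a self-map of $\mathbb{D}$ with $g(\gamma)=a_0$ and $a_n=(1-\gamma)^n g^{(n)}(\gamma)/n!$ --- but then finishes in one stroke by invoking Ruscheweyh's pointwise derivative estimate for bounded analytic functions,
\begin{align*}
\frac{|g^{(n)}(z)|}{n!}\leq\frac{1-|g(z)|^2}{(1-|z|^2)(1-|z|)^{n-1}},\quad n\geq 1,
\end{align*}
at the point $z=\gamma$, after which the powers of $1-\gamma$ cancel and leave $(1-|a_0|^2)/(1+\gamma)$. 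You instead conjugate by the automorphism $T$ sending $\gamma$ to $0$, reduce to Wiener's inequality $|c_m|\leq 1-|c_0|^2$ (which you also prove, via the root-of-unity averaging and Schwarz--Pick), and then evaluate the coefficients of $\Theta^m$ exactly. In effect you have re-derived, inline, precisely the case of Ruscheweyh's inequality that is needed: your composition $f=\tilde g\circ\Theta$ with $\Theta=T\circ\psi^{-1}$ is the same object that proofs of that estimate manipulate. What your version buys is self-containedness --- nothing beyond Schwarz--Pick is quoted --- at the cost of the explicit combinatorics $[z^n]\Theta(z)^m=\binom{n-1}{m-1}\gamma^{n-m}(1+\gamma)^{-n}$ and the binomial summation; the cited route buys brevity at the cost of an external lemma. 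Your closing remark is also accurate: since the largest disk centered at $0$ contained in $\Omega_\gamma$ is $\mathbb{D}$ itself (the point $z=1$ lies on $\partial\Omega_\gamma$), no naive restriction or rescaling can do better than $1-|a_0|^2$, and the gain of the factor $1/(1+\gamma)$ genuinely requires exploiting the interior point $\gamma$ after the affine transplant, exactly as both arguments do.
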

We are in a state to give our first theorem as an answer to the Question \ref{Q-2}.
\begin{thm}\label{TH-1.1}
	For $0\leq\gamma<1$, let $f\in\mathcal{B}(\Omega_\gamma)$ with $f(z)=\sum_{n=0}^{\infty}a_nz^n$ in $\mathbb{D}$. Then 
	\begin{align*}
		\mathcal{F}_f(r)\leq\frac{1}{1-r}
	\end{align*}
	for $r\leq r_0=(1+\gamma)/{(3+\gamma)}$. The radius $r_0$ is best possible.
\end{thm}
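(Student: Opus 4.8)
The plan is to exploit the fact that the exponential twists in the Fourier majorant have modulus one, which collapses $\mathcal{F}_f(r)$ into a double sum of the moduli $|a_k|$, and then to feed in the coefficient bound of Lemma A. First I would note that $|e^{-2\pi i nk/(n+1)}|=1$, so that
\[
\mathcal{F}_f(r)=\sum_{n=0}^{\infty}\Big(\sum_{k=0}^{n}|a_k|\Big)r^n .
\]
Writing $a:=|a_0|$ and applying Lemma A, which gives $|a_k|\le (1-a^2)/(1+\gamma)$ for every $k\ge 1$, the inner partial sum $S_n=\sum_{k=0}^n|a_k|$ is bounded by $a+n(1-a^2)/(1+\gamma)$. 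Summing the two elementary series $\sum r^n=1/(1-r)$ and $\sum n r^n=r/(1-r)^2$ then yields
\[
\mathcal{F}_f(r)\le \frac{a}{1-r}+\frac{1-a^2}{1+\gamma}\,\frac{r}{(1-r)^2}.
\]

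Next I would show the right-hand side is at most $1/(1-r)$ exactly when $r$ is small. Clearing $(1-r)^2$ and cancelling the factor $(1-a)$ (the case $a=1$ being the constant function, for which equality holds for all $r$), the desired inequality reduces to $(1+a)r\le(1+\gamma)(1-r)$, that is
\[
r\le \frac{1+\gamma}{2+a+\gamma}.
\]
Since the right-hand side decreases in $a$ on $[0,1)$ with infimum $(1+\gamma)/(3+\gamma)=r_0$ approached as $a\to1^-$, the restriction $r\le r_0$ forces the inequality for every admissible $f$, which settles the forward direction.

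For sharpness I would exhibit the automorphism-type extremal
\[
f_a(z)=\frac{a(1+\gamma)-(1+a\gamma)z}{(1+\gamma)-(a+\gamma)z},
\]
obtained by transporting a M\"obius self-map of $\mathbb{D}$ through the affine conformal map $w=(1-\gamma)z+\gamma$ of $\Omega_\gamma$ onto $\mathbb{D}$, so that $f_a\in\mathcal{B}(\Omega_\gamma)$ with $f_a(0)=a$. Expanding $1/(1-qz)$ with $q=(a+\gamma)/(1+\gamma)$ gives $|a_n|=q^{\,n-1}(1-a^2)/(1+\gamma)$ for $n\ge1$, whence $S_n=a+(1+a)(1-q^n)$ and
\[
\mathcal{F}_{f_a}(r)=\frac{1+2a}{1-r}-\frac{1+a}{1-qr}.
\]
Comparing with $1/(1-r)$ and cancelling $(1-a)$ shows $\mathcal{F}_{f_a}(r)\le 1/(1-r)$ holds precisely for $r\le (1+\gamma)/(2a+1+\gamma)$, a threshold decreasing to $r_0$ as $a\to1^-$; hence for any $r>r_0$ some $f_a$ violates the inequality, so $r_0$ cannot be enlarged.

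I expect the sharpness to be the only genuine obstacle. The naive estimate $S_n\le a+n(1-a^2)/(1+\gamma)$ is \emph{not} attained by any single function, because the extremal coefficients decay geometrically (ratio $q$) rather than remaining constant; so one cannot simply read the extremal off the upper bound. The decisive step is therefore to identify the correct boundary function $f_a$ on $\Omega_\gamma$, verify $f_a\in\mathcal{B}(\Omega_\gamma)$, and evaluate its Fourier majorant in closed form, after which the optimisation $a\to1^-$ is routine.
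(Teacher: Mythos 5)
Your proof is correct and follows essentially the same route as the paper: the identical reduction of $\mathcal{F}_f(r)$ via Lemma A to the bound $\frac{a}{1-r}+\frac{(1-a^2)r}{(1+\gamma)(1-r)^2}$ with $a=|a_0|$, and sharpness via the same family of M\"{o}bius maps of $\Omega_\gamma$ onto $\mathbb{D}$ with $a\to 1^-$ (your $f_a$ is the paper's extremal $f_0$ up to a reparametrization of the parameter $a$). The only differences are cosmetic though pleasant: you replace the paper's calculus argument (showing $\Psi_{\gamma,r}'$ is decreasing and nonnegative at $x=1$ for $r\le r_0$) by direct algebra, cancelling $(1-a)$ to get the explicit threshold $r\le(1+\gamma)/(2+a+\gamma)$, and your normalization $f_a(0)=a$ gives the closed form $\mathcal{F}_{f_a}(r)=\frac{1+2a}{1-r}-\frac{1+a}{1-qr}$ with the exact cut-off $(1+\gamma)/(2a+1+\gamma)$, in place of the paper's limiting argument with the auxiliary function $\Phi^{\gamma}_a$.
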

\begin{rem}
	By putting $\gamma=0$, we obtain exactly \cite[Theorem 1]{Ong-2024} for unit disk $\mathbb{D}$.
\end{rem}
\begin{proof}[\bf Proof of Theorem \ref{TH-1.1}]
	The function $f$ be analytic in $\Omega_\gamma$, with $|f(z)|<1$. First, we note that
	\begin{align*}
		\mathcal{F}_f(r)=\sum_{n=0}^{\infty}\bigg(\sum_{k=0}^{n}|a_k|\bigg)r^n.
	\end{align*}
	By Lemma A, we have $|a_k|\leq(1-|a_0|^2)/(1+\gamma)$. A routine computation yields that
	\begin{align*}
	\mathcal{F}_f(r)\leq|a_0|\sum_{n=0}^{\infty}r^n+\frac{1-|a_0|^2}{1+\gamma}\sum_{n=0}^{\infty}nr^n=\frac{|a_0|}{1-r}+\frac{(1-|a_0|^2r)}{(1+\gamma)(1-r)^2}.
	\end{align*}
	For $|a_0|=x\in (0, 1)$, we define the function
	\begin{align*}
	\Psi_{\gamma,r}(x):=\frac{x}{1-r}+\frac{(1-x^2)r}{(1+\gamma)(1-r)^2}\; \mbox{for}\; r,\gamma\in[0,1).
	\end{align*}
	Differentiating $\Psi_{\gamma,r}(x)$ w.r.t. $x$, we see that
	\begin{align*}
		(\Psi_{\gamma,r})^{\prime}(x)=\frac{1}{1-r}-\frac{2xr}{(1+\gamma)(1-r)^2}
	\end{align*}
	and
	\begin{align*}
		(\Psi_{\gamma,r})^{\prime\prime}(x)=-\frac{2r}{(1+\gamma)(1-r)^2}\leq0\;\;\mbox{for}\;r,\gamma\in[0,1).
	\end{align*}
	Evidently, $(\Psi_{\gamma,r})^{\prime}(x)$ is a decreasing function of $x$. Hence, 
	\begin{align*}
		(\Psi_{\gamma,r})^{\prime}(x)\geq(\Psi_{\gamma,r})^{\prime}(1)=\frac{(1-r)(1+\gamma)-2r}{(1+\gamma)(1-r)^2}.
	\end{align*}
	Then, it is clear that $(\Psi_{\gamma,r})^{\prime}(1)\geq 0$ for $r\leq r_0=(1+\gamma)/(3+\gamma)$. Thus we see that  $(\Psi_{\gamma,r})^{\prime}(x)\geq 0$ when $r\leq r_0$. For $r\leq r_0, \Psi_{\gamma,r}(x)$ being an increasing function yields that
	\begin{align*}
		\Psi_{\gamma,r}(x)\leq \Psi_{\gamma,r}(1)=\frac{1}{1-r}.
	\end{align*}
	The inequality we desire has now been established.\vspace{1mm}
	
\noindent To show that the radius $r_0$ is best possible, we consider the function 
	\begin{align}\label{EQQ-3.1}
		f_0(z)=\frac{a-\gamma-(1-\gamma)z}{1-a\gamma-a(1-\gamma)z}=A_0-\sum_{n=1}^{\infty}A_nz^n,\;z\in\mathbb{D},
	\end{align} 
	where
	\begin{align*}
		\begin{cases}
			A_0=\dfrac{a-\gamma}{1-a\gamma},\vspace{2mm}\\A_n=\dfrac{1-a^2}{a(1-a\gamma)}\bigg(\dfrac{a(1-\gamma)}{1-a\gamma}\bigg)^n\; \mbox{for}\; n\in\mathbb{N}.
		\end{cases}
	\end{align*}
	With this function $ f_0 $, a routine computation shows that
	\begin{align*}
		\mathcal{F}_{f_0}(r)&=\frac{a-\gamma}{1-a\gamma}\frac{1}{1-r}+\frac{1-a^2}{a(1-a\gamma)}\sum_{n=1}^{\infty}\bigg(\sum_{k=1}^{n}\bigg(\frac{a(1-\gamma)}{1-a\gamma}\bigg)^k\bigg)r^n\\&=\frac{a-\gamma}{1-a\gamma}\frac{1}{1-r}+\frac{1-a^2}{a(1-a\gamma)}\sum_{n=1}^{\infty}\frac{a(1-\gamma)}{1-a}\bigg(1-\bigg(\frac{a(1-\gamma)}{1-a\gamma}\bigg)^n\bigg)r^n\\&=\frac{a-\gamma}{1-a\gamma}\frac{1}{1-r}+\frac{(1+a)(1-\gamma)}{(1-a\gamma)}\sum_{n=1}^{\infty}\bigg(1-\bigg(\frac{a(1-\gamma)}{1-a\gamma}\bigg)^n\bigg)r^n\\&=\frac{1}{1-r}-\frac{1}{1-r}\Phi^\gamma_a(r),
		\end{align*} 
		where
		\begin{align*}
			\Phi^\gamma_a(r):=1-\frac{a-\gamma}{1-a\gamma}+\frac{(1+a)(1-\gamma)}{(1-a\gamma)}\bigg(-r+\frac{a(1-\gamma)r}{1-a\gamma-ar+a\gamma r}\bigg).
		\end{align*}
		A further computation shows that
		\begin{align*}
			(\Phi^\gamma_a)^{\prime}(r)=-\frac{(1-\gamma)(1-a^2)}{(1-a\gamma(1-r)-ar)^2}<0\;\;\mbox{for}\;r\in(0,1),
		\end{align*}
	hence, $(\Phi^\gamma_a)(r)$ is an increasing function of $r$ in $(0,1)$ and thus for $r>r_0=(1+\gamma)/(3+\gamma)$, we have
		\begin{align*}
				(\Phi^\gamma_a)(r)<(\Phi^\gamma_a)(r_0)=\frac{2(1-a)^2(1+\gamma)^2}{(1-a\gamma)(3-a+\gamma-3a\gamma)},
		\end{align*}
		which tends to $0$ as $a\to1^-$. Therefore, $(\Phi^\gamma_a)(r)$ is negative for $r>r_0$, and hence 
		\begin{align*}
			\frac{1}{1-r}-\frac{1}{1-r}\Phi^\gamma_a(r)>\frac{1}{1+r}.
		\end{align*}
		This completes the proof.
\end{proof}
\begin{thm}\label{TH-1.2}
		For $0\leq\gamma<1$, let $f\in\mathcal{B}(\Omega_\gamma)$ with $f(z)=\sum_{n=0}^{\infty}a_nz^n$ in $\mathbb{D}$. Then 
	\begin{align}\label{Eq-3.1}
		\mathcal{L}_f(r)\leq\frac{1}{r}\ln\bigg(\frac{1}{1-r}\bigg)
	\end{align}
	for $0<r<1$.
\end{thm}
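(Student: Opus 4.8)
The plan is to prove the inequality coefficientwise rather than to collapse everything into a closed form as in the proof of Theorem~\ref{TH-1.1}. The key observation is that the majorant on the right has the convenient expansion
\begin{align*}
\frac{1}{r}\ln\left(\frac{1}{1-r}\right)=\sum_{n=0}^{\infty}\frac{r^n}{n+1}.
\end{align*}
Since each power $r^n$ is non-negative for $0<r<1$, it suffices to show that the coefficient of $r^n$ in $\mathcal{L}_f(r)$ never exceeds that of the majorant, i.e. that for every $n\geq 0$
\begin{align*}
\sum_{k=0}^{n}\frac{|a_k|}{(n+1)^{k+1}}\leq\frac{1}{n+1}.
\end{align*}
The case $n=0$ is trivial, since the left-hand side is just $|a_0|\leq 1$.

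For $n\geq 1$ I would split off the $k=0$ term, which equals $|a_0|/(n+1)$, and estimate the remaining coefficients by Lemma~A through $|a_k|\leq(1-|a_0|^2)/(1+\gamma)$. Evaluating the finite geometric sum $\sum_{k=1}^{n}(n+1)^{-(k+1)}=\bigl(1-(n+1)^{-n}\bigr)/\bigl(n(n+1)\bigr)$ and multiplying through by $n+1$, the required bound collapses to the single-variable inequality
\begin{align*}
x+c_n\bigl(1-x^2\bigr)\leq 1,\qquad x=|a_0|\in[0,1],\quad c_n:=\frac{1-(n+1)^{-n}}{n(1+\gamma)}.
\end{align*}

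To close the argument I would verify the uniform bound $c_n\leq\tfrac12$ for all $n\geq 1$ and all $\gamma\in[0,1)$: one has $c_1=\tfrac{1}{2(1+\gamma)}\leq\tfrac12$, and for $n\geq 2$ the cruder estimate $c_n<\tfrac{1}{n(1+\gamma)}\leq\tfrac1n\leq\tfrac12$ suffices. With $h(x):=x+c_n(1-x^2)$ this gives $h'(x)=1-2c_nx\geq 0$ on $[0,1]$, so $h$ is non-decreasing and $h(x)\leq h(1)=1$, which is precisely the desired coefficient inequality. Summing the term-by-term bounds against $r^n$ then delivers $\mathcal{L}_f(r)\leq\frac1r\ln\bigl(1/(1-r)\bigr)$ for every $0<r<1$.

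The only step that is more than bookkeeping is the passage to the monotone quadratic $h$ together with the estimate $c_n\leq\tfrac12$: it is exactly the factor $1/(1+\gamma)$ supplied by Lemma~A that keeps $c_n$ below $\tfrac12$ uniformly in $n$ and $\gamma$, and it is this fast decay of the coefficients (the $(n+1)^{k+1}$ in the denominators) that lets the conclusion hold on the whole interval $(0,1)$ with no restriction on $r$, in sharp contrast to Theorem~\ref{TH-1.1}.
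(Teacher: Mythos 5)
Your proof is correct, and it takes a genuinely different route from the paper's. The paper applies Lemma~A and then immediately sums over $n$ into closed form: it discards the negative tail $-\sum_{n\geq1}r^n/\bigl(n(n+1)^{n+1}\bigr)$, rewrites the remaining series in terms of $\ln(1-r)$, and then performs a two-derivative calculus argument in $x=|a_0|$ on the resulting function $\Psi_1^\gamma(x)$ to show it is increasing and hence bounded by its value at $x=1$, namely $\tfrac1r\ln\bigl(1/(1-r)\bigr)$. You instead prove the stronger coefficientwise statement
\begin{align*}
\sum_{k=0}^{n}\frac{|a_k|}{(n+1)^{k+1}}\leq\frac{1}{n+1}\qquad\text{for every }n\geq0,
\end{align*}
keeping the geometric sum $\sum_{k=1}^{n}(n+1)^{-(k+1)}=\bigl(1-(n+1)^{-n}\bigr)/\bigl(n(n+1)\bigr)$ exact (the paper throws part of it away) and reducing each $n$ to the elementary inequality $x+c_n(1-x^2)\leq1$, valid because $c_n\leq\tfrac12$; your verification of $c_n\leq\tfrac12$ ($c_1=\tfrac{1}{2(1+\gamma)}$, and $c_n<\tfrac1n$ for $n\geq2$) and the monotonicity of $h(x)=x+c_n(1-x^2)$ on $[0,1]$ are both sound. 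Your approach buys several things: it avoids all logarithmic manipulations and the ad hoc discarding of a series term, it yields term-by-term domination of the majorant series (strictly more than the stated inequality, and it makes transparent why the bound holds on all of $(0,1)$ and is independent of $\gamma$), and it is entirely elementary. One small caveat: your closing remark that the factor $1/(1+\gamma)$ from Lemma~A is what keeps $c_n$ below $\tfrac12$ is slightly overstated, since at $\gamma=0$ one still has $c_1=\tfrac12$ and $c_n\leq\tfrac1n$ for $n\geq2$, so the bound $c_n\leq\tfrac12$ holds even without that factor; this does not affect the validity of the proof.
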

\begin{rem}
In particular, when $\gamma=0$, we get exactly the result \cite[Theorem 3]{Ong-2024} for unit disk $\mathbb{D}$. In fact, it is worth pointing out that the upper bound in the inequality \eqref{Eq-3.1} is independent of $\gamma\in [0, 1)$ in Theorem \ref{TH-1.2}. Thus, Theorem \ref{TH-1.2} is a more compact form of \cite[Theorem 3]{Ong-2024}. 
\end{rem}
\begin{proof}[\bf Proof of Theorem \ref{TH-1.2}]
The function $f$ be analytic in $\Omega_\gamma$, with $|f(z)|<1$. First, we note that
\begin{align*}
\mathcal{L}_f(r)=\sum_{n=0}^{\infty}\bigg(\sum_{k=0}^{n}\frac{|a_k|}{(n+1)^{k+1}}\bigg)r^n.
\end{align*}
By Lemma A, a tedious computation, using the estimate
\begin{align*}
-\sum_{n=1}^{\infty}\frac{r^n}{n(n+1)^{n+1}}<0,
\end{align*}
we see that
\begin{align*}
\mathcal{L}_f(r)&\leq |a_0|\sum_{n=0}^{\infty}\frac{r^n}{n+1}+\frac{1-|a_0|^2}{1+\gamma}\sum_{n=1}^{\infty}\bigg(\sum_{k=1}^{n}\frac{1}{(n+1)^{k+1}}\bigg)r^n\\&=-\frac{|a_0|}{r}\ln(1-r)+\frac{1-|a_0|^2}{1+\gamma}\sum_{n=1}^{\infty}\bigg(\frac{1}{n(n+1)}-\frac{1}{n(n+1)^{n+1}}\bigg)r^n\\&<-\frac{|a_0|}{r}\ln(1-r)+\frac{1-|a_0|^2}{1+\gamma}\bigg[\bigg(\frac{1}{r}-1\bigg)\ln(1-r)+1\bigg]\\&=\frac{\{-|a_0|(1+\gamma)+(1-|a_0|^2)(1-r)\}\ln(1-r)}{r(1+\gamma)}+\frac{1-|a_0|^2}{1+\gamma}\\&:=\Psi_1^\gamma(|a_0|)
	\end{align*}
  Let $|a_0|=x\in (0, 1)$. We see that
	\begin{align*}
		(\Psi_1^\gamma)^\prime(x)=\frac{\{2xr-2x-(1+\gamma)\}\ln(1-r)-2xr}{r(1+\gamma)}
	\end{align*}
	and 
	\begin{align*}
		(\Psi_1^\gamma)^{\prime\prime}(x)=-\frac{2r(1-\ln(1-r))+2\ln(1-r)}{(1+\gamma)r}\leq 0 \;\;\mbox{for all}\;\;x,r,\gamma\in[0,1).
	\end{align*}
	Evidently, $(\Psi_1^\gamma)^\prime(x)$ is a decreasing function of $x$, which implies that 
	\begin{align*}
	(\Psi_1^\gamma)^\prime(x)\geq(\Psi_1^\gamma)^\prime(1)=\frac{\{2r-2-(1+\gamma)\}\ln(1-r)-2r}{r(1+\gamma)}=\frac{\Phi^\gamma(r)}{r(1+\gamma)},
	\end{align*}
	where
	\begin{align*}
	\Phi^\gamma(r)=\{2r-2-(1+\gamma)\}\ln(1-r)-2r.
	\end{align*}
 We observe that $\Phi^\gamma(r)$ is an increasing function because of the fact that
 \begin{align*}
 (\Phi^\gamma)^{\prime}(r)=\frac{1+\gamma}{1-r}+2\ln(1-r)>0\;\;\mbox{for all}\;\;r\in(0,1).
 \end{align*}
 Consequently, we have $\Phi^\gamma(r)>\Phi^\gamma(0)=0$. Hence, $(\Psi_1^\gamma)^\prime(x)>0$ for all $x\in (0, 1)$. Finally, we have
 \begin{align*}
 \Psi_1^\gamma(|a_0|)\leq\Psi_1^\gamma(1)=\frac{1}{r}\ln\bigg(\frac{1}{1-r}\bigg).
 \end{align*}
 This completes the proof.
\end{proof}
\subsection{Concluding remark on Theorem \ref{TH-1.2}}
We have used the estimate
\begin{align}\label{Eq-3.2}
	-\sum_{n=1}^{\infty}\frac{r^n}{n(n+1)^{n+1}}<0,
\end{align} in the proof of Theorem \ref{TH-1.2}. However, the proof can be modified without using \eqref{Eq-3.2}. For example, using the estimate
\begin{align*}
	\sum_{k=1}^{n}\frac{1}{(n+1)^{k+1}}\leq \sum_{k=1}^{n}\frac{1}{(n+1)^2}=\frac{n}{(n+1)^2}
\end{align*}
we see that
\begin{align*}
	\sum_{n=0}^{\infty}\left(\sum_{k=1}^{n}\frac{1}{(n+1)^{k+1}}\right)r^n\leq \sum_{n=0}^{\infty}\left(\sum_{k=1}^{n}\frac{n}{(n+1)^{2}}\right)r^n=\frac{1}{r}\ln\left(\frac{1}{1-r}\right)-\frac{{\rm Li}_2(r)}{r}.
\end{align*}
Thus, we have
\begin{align}\label{Eq-3.3}
	\mathcal{L}_f(r)\leq \frac{a}{r}\ln\left(\frac{1}{1-r}\right)+\frac{1-a^2}{1+\gamma}\left(\frac{1}{r}\ln\left(\frac{1}{1-r}\right)-\frac{{\rm Li}_2(r)}{r}\right):=F_{\gamma,r}(a).
\end{align}
A simple computation gives us
\begin{align*}
	\frac{d}{da}(F_{\gamma,r}(a))=\frac{1}{r}\ln\left(\frac{1}{1-r}\right)-\frac{2a}{1+\gamma}\left(\frac{1}{r}\ln\left(\frac{1}{1-r}\right)-\frac{{\rm Li}_2(r)}{r}\right)
\end{align*}
 and 
 \begin{align*}
 	\frac{d^2}{da^2}(F_{\gamma,r}(a))=-\frac{2}{1+\gamma}\left(\frac{1}{r}\ln\left(\frac{1}{1-r}\right)-\frac{{\rm Li}_2(r)}{r}\right)\leq 0
 \end{align*}
 for all $r,\gamma\in (0, 1)$.\vspace{2mm}
 
 \noindent Clearly, $\frac{d}{da}(F_{\gamma,r}(a))$ is a decreasing function of $a\in (0, 1)$, hence 
 \begin{align*}
 	\frac{d}{da}(F_{\gamma,r}(a))\geq \frac{d}{da}(F_{\gamma,r}(1))=\frac{1}{r}\ln\left(\frac{1}{1-r}\right)-\frac{2}{1+\gamma}\left(\frac{1}{r}\ln\left(\frac{1}{1-r}\right)-\frac{{\rm Li}_2(r)}{r}\right)\geq 0
 \end{align*}
 for all $a\in (0, 1)$. This shows that $F_{\gamma,r}(a)$ is an increasing function of $a\in (0, 1)$. Consequently, we have 
\begin{align*}
\mathcal{L}_f(r)\leq F_{\gamma,r}(a)\leq F_{\gamma,r}(1)=\frac{1}{r}\ln\left(\frac{1}{1-r}\right)\; \mbox{for all}\; r\in (0, 1).
\end{align*}
The same estimate, as in \cite{Ong-2024}, is obtained. Our observation is that regardless of how we arrange the series, the upper bound of $\mathcal{L}_f(r)$ will remain the same due to the presence of the factor $(1-a^2)$ in the expression of the function $F_{\gamma,r}(a)$, as we take $a\to 1^{-}$.\vspace{2mm}

However, \cite{Ong-2024} does not accurately determine the Bohr radius. In contrast, our study shows that it is possible to find it by rearranging the terms using a different approach. Let us rewrite the right side of \eqref{Eq-3.3} in the following manner:
\begin{align*}
\mathcal{L}_f(r)&\leq \frac{1}{r}\ln\left(\frac{1}{1-r}\right)+\frac{(1-a)}{r}\ln (1-r)+ \frac{2(1-a)}{1+\gamma}\left(\frac{1}{r}\ln\left(\frac{1}{1-r}\right)-\frac{{\rm Li}_2(r)}{r}\right)\\&\leq\frac{1}{r}\ln\left(\frac{1}{1-r}\right)+\frac{(1-a)}{(1+\gamma)}\bigg[\frac{(1+\gamma)}{r}\ln(1-r)+2\left(\frac{1}{r}\ln\left(\frac{1}{1-r}\right)-\frac{{\rm Li}_2(r)}{r}\right)\bigg]\\&\leq \frac{1}{r}\ln\left(\frac{1}{1-r}\right)+\frac{(1-a)}{(1+\gamma)}\Phi_{\gamma}(r),
\end{align*}
where 
\begin{align*}
	\Phi_{\gamma}(r):=\frac{(1+\gamma)}{r}\ln(1-r)+2\left(\frac{1}{r}\ln\left(\frac{1}{1-r}\right)-\frac{{\rm Li}_2(r)}{r}\right).
\end{align*}
Thus, the desired inequality
\begin{align}\label{Eq-3.5}
	\mathcal{L}_f(r)\leq \frac{1}{r}\ln\left(\frac{1}{1-r}\right)
\end{align}
can be obtained if $\Phi_{\gamma}(r)\leq 0$ for $r\leq r_{\gamma}<1$.  By using basic theorems in calculus, one can obtain $r_{\gamma}$ as a root in $(0, 1)$ of the equation $\Phi_{\gamma}(r)=0$ for values of $\gamma\in [0, \gamma_*]$, where $\gamma_*\approx 0.27713$. Consequently, we see that $r_{\gamma}$ is the Bohr radius for the discrete Laplace transform $ \mathcal{L} $. It is important to note that, specifically, when $\Omega_0=\mathbb{D}$, we observe that the root of the following equation is $r_0\approx 0.940599$,
\begin{align*}
	\frac{1}{r}\ln(1-r)+\frac{2}{r}\left(\ln\left(\frac{1}{1-r}\right)-{\rm Li}_2(r)\right)=0.
\end{align*}
Hence, we see that a better estimate is obtained in the radius.\vspace{1.2mm}

Moreover, it can be shown that the inequality \eqref{Eq-3.5} is sharp and this can be shown using the function $f_0$ as given by \eqref{EQQ-3.1}. When $f$ is equal to $f_0$, we observe that
\begin{align*}
	\mathcal{L}_{f_0}(r)&=|A_0|\sum_{n=0}^{\infty}\frac{r^n}{n+1}+\sum_{n=1}^{\infty}\left(\sum_{k=1}^{n}\frac{|A_n|}{(n+1)^{k+1}}\right)r^n\\&=\left(\frac{a-\gamma}{1-a\gamma}\right)\frac{1}{r}\ln\left(\frac{1}{1-r}\right)+\frac{1-a^2}{a(1-a\gamma)}\sum_{n=1}^{\infty}\left(\sum_{k=1}^{n}\frac{1}{(n+1)^{k+1}}\left(\frac{a(1-\gamma)}{1-a\gamma}\right)^k\right)r^k\\&=\frac{1}{r}\ln\left(\frac{1}{1-r}\right)+\frac{1-a}{1-a\gamma}\bigg[-\frac{(1+\gamma)}{r}\ln\left(\frac{1}{1-r}\right)\\&\quad+\frac{1+a}{a}\sum_{n=1}^{\infty}\left(\sum_{k=1}^{n}\frac{1}{(n+1)^{k+1}}\left(\frac{a(1-\gamma)}{1-a\gamma}\right)^k\right)r^k\bigg].
\end{align*}
Taking $a\to 1^{-}$ in the last expression, it is easy to see that 
\begin{align*}
	\mathcal{L}_f(r)=\frac{1}{r}\ln\left(\frac{1}{1-r}\right)
\end{align*}
which shows that the bound is sharp. In response to \cite[Remark 1]{Ong-2024}, we now say that our result improves the estimate in the radius. Additionally, we show that a sharper inequality can be established in certain situations. \vspace{2mm}

\noindent{\bf Acknowledgment:} The first author is supported by SERB File No. SUR/2022/002244, Govt. of India and the second author is supported by UGC-JRF, File No. (NTA Ref. No. 211610135410), Govt. of India, and the third author is supported by UGC-JRF (NTA Ref. No. 201610135853), New Delhi, Govt. of India.

\vspace{2mm}

\noindent\textbf{Compliance of Ethical Standards:}\\

\noindent\textbf{Conflict of interest.} The authors declare that they have no conflicts of interest regarding the publication of this paper.\vspace{1.5mm}

\noindent\textbf{Funds.} No funds.\vspace{1.5mm}

\noindent\textbf{Data availability statement.}  Data sharing not applicable to this article as no datasets were generated or analysed during the current study.

\end{document}